\DeclareSymbolFont{cyrletters}{OT2}{wncyr}{m}{n}
\DeclareMathSymbol{\Sha}{\mathalpha}{cyrletters}{"58}
\newcommand{\ms}{\mathscr}
\newcommand{\wh}{\widehat}
\newcommand{\frf}{\operatorname{frac}}
\newcommand{\mbb}{\mathbb}
\newcommand{\im}{\operatorname{im}}
\newcommand{\Q}{\mathbb{Q}}
\newcommand{\Z}{\mathbb{Z}}
\newcommand{\oper}[1]{\operatorname{#1}}
\newcommand{\Spec}{\oper{Spec}}
\newcommand{\Gal}{\oper{Gal}}
\newcommand{\cha}{\oper{char}}
\newcommand{\Supp}{\oper{Supp}}
\newcommand{\Hom}{\oper{Hom}}
\newcommand{\cd}{\oper{cd}}
\newcommand{\td}{\oper{tr.deg.}}
\newcommand{\nr}{\oper{nr}}
\newcommand{\til}[1]{\widetilde{#1}}
\newcommand{\Br}{\operatorname{Br}}
\newcommand{\nc}{\newcommand}
\nc{\flag}[1]{{\color{black}#1}}
\nc{\dvf}{{\flag K}} 
\nc{\dvr}{{\flag{\ms O_K}}} 
\nc{\sgf}{{\flag F}} 
\nc{\fy}{{\flag E}} 
\nc{\sgm}{\flag{\ms X}} 
\nc{\my}{\flag{\ms Y}} 
\nc{\gsg}{\flag{X}} 
\nc{\gy}{\flag{Y}} 
\nc{\shasgm}[1]{\flag{\Sha^{#1}_{\sgm}}} 
\nc{\shasgf}[1]{\flag{\Sha^{#1}_{\sgf}}} 
\nc{\shasgfz}[1]{\flag{\Sha^{#1}_{\sgf,0}}} 
\nc{\shamy}[1]{\flag{\Sha^{#1}_{\my}}} 
\nc{\shafy}[1]{\flag{\Sha^{#1}_{\fy}}} 
\nc{\shagsg}[1]{\flag{\Sha^{#1}_{\Omega_{\gsg}}}} 
\nc{\valsgm}{\flag{\Omega_{\sgm}}} 
\nc{\valsgf}{\flag{\Omega_{\sgf}}} 
\nc{\valmy}{\flag{\Omega_{\my}}} 
\nc{\valfy}{{\flag{\Omega_{\fy}}}} 
\nc{\valgsg}{\flag{\Omega_{\gsg}}} 
\newtheorem{thm}{Theorem}[section]
\newtheorem{lemma}[thm]{Lemma}
\newtheorem{cor}[thm]{Corollary}
\newtheorem{prop}[thm]{Proposition}
\theoremstyle{definition}
\newtheorem{rem}[thm]{Remark}
\newtheorem{notation}[thm]{Notation}
\newtheorem{example}[thm]{Example}
\title{Local-global principles for curves over semi-global fields\\ }
\author[David Harbater]{David Harbater}
\address{David Harbater, Department of Mathematics,
 University of Pennsylvania\\
  Philadelphia, PA 19104-6395, USA}
\email{harbater@math.upenn.edu}
\author[Daniel Krashen]{Daniel Krashen}
\address{Daniel Krashen, Department of Mathematics, Rutgers University\\
Piscataway, NJ 08854, USA}
\email{daniel.krashen@rutgers.edu}
\author[Alena Pirutka]{Alena Pirutka}
\address{Alena Pirutka, Courant Institute of Mathematical Sciences,
New York University\\
New York, New York 10012, USA\newline
National Research University Higher School of Economics, Russian Federation}
\email{pirutka@cims.nyu.edu}
\date{July 13, 2020}
\thanks{
\!\!\!\textit{Mathematics Subject Classification} (2010): 11E72, 14H25, 20G15 (primary);  11G20, 12G05, 14G27 (secondary).\\
\textit{Key words and phrases.} Local-global principles, Galois cohomology, finite group schemes, 
semi-global fields, Tate-Shafarevich group, unramified cohomology, discrete valuation rings, 
arithmetic curves.}
\begin{document}
\maketitle
\vspace{-0.5cm}

\begin{abstract}
We investigate local-global principles for Galois cohomology, in the context of function fields of curves over semi-global fields.  This extends work of Kato's on the case of function fields of curves over global fields.
\end{abstract}

\section{Introduction}

A basic theme in field arithmetic is the passage from a field $\sgf$ to the
field of rational functions in one variable $\sgf(x)$. In this paper, we discuss how
local-global principles behave with respect to such transitions. In particular,
suppose we are given a collection of overfields $\sgf_v, v \in \Omega$, of
$\sgf$. A natural question to ask is, when one has knowledge of local-global
principles for $\sgf$ with respect to the overfields $\sgf_v$, what
local-global principles may be obtained for the field $\sgf(x)$ with respect to
the overfields $\sgf_v(x)$. More generally, one may also replace $\sgf(x)$ by
the function field $\sgf(\gy)$ of an algebraic curve $\gy/\sgf$.

Such questions have been asked before, primarily in the case that $\sgf$ is a global field, and some results have been obtained for the third Galois cohomology group \cite{Kato}, for Witt groups of quadratic forms \cite{ParSuj}, for the Brauer group \cite{LPS}, and for torsors over simply connected, semisimple groups of classical type \cite{ParPre}. A theme that has emerged in these investigations is that as one adjoins a transcendental element to a field, one expects to find local-global principles one degree higher than previously.  

The first main instance of this theme concerned the theorem of Albert-Brauer-Hasse-Noether, which states a local-global principle for the Brauer group of a global field $\sgf$, or equivalently for $H^2(\sgf,\mu_m)$.  Such a principle need not hold for $\sgf(\gy)$, if $\gy$ is a curve over a global field $\sgf$.  But in \cite{Kato}, a local-global principle was shown in this context for $H^3(\sgf(\gy),\mu_m^{\otimes 2})$, with respect to the places of $\sgf$.  

In recent years, local-global principles have been studied for semi-global fields; i.e., one-variable function fields $\sgf$ over a complete discretely valued field $\dvf$.  Such principles were shown in \cite{CPS} and \cite{HHK:H1} for $H^1(\sgf,G)$ in the context of a rational linear algebraic group $G$;
and in \cite{HHK:LGGC} for $H^n(\sgf,\mu_m^{\otimes{n-1}})$ with respect to the discrete valuations on $\sgf$ if $n \ge 2$.  The above theme suggests that one should expect a local-global principle for $H^n(\sgf(\gy),\mu_m^{\otimes{n-1}})$ with respect to the discrete valuations on a semi-global field $\sgf$ for sufficiently large values of $n \le \cd(\sgf(\gy))$.  We prove such a result in Theorem~\ref{LGP for line} for $n \ge 3$ in the case that $\gy$ is the projective line over $\sgf$, and for more general curves $\gy/\sgf$ in Theorem~\ref{LGP over local field} for $n \ge 3$ if $\dvf$ is a local field.

In several papers (see \cite{CH}, \cite{HSS},\cite{HS16}), local-global principles for semi-global fields $\sgf$ were considered with respect to a smaller class of discrete valuations; viz., those that are trivial on the underlying complete discretely valued field $\dvf$.  In that situation, a local-global principle holds less often, since the hypothesis is weaker.  In Theorem~\ref{gen fiber Sha} we consider local-global principles for $H^n(\sgf(x),\mu_m^{\otimes{n-1}})$ with respect to this smaller set of valuations on a semi-global field $\sgf$, showing that it fails for $n=4$ but holds under certain hypotheses (e.g., good reduction) for $n=3$.

This manuscript is organized as follows.  
We introduce the set-up and discuss some classical examples in Section \ref{setup}, followed by preliminary results in Section~\ref{lemmas}.  In Section~\ref{sgf section}, after establishing our notation, we discuss the relationship between two types of local-global principles on a function field $\fy = \sgf(\gy)$ over a semi-global field $\sgf$: with respect to the discrete valuations on $\fy$, and with respect to those on $\sgf$ (following Kato).  Our main results appear in Section~\ref{results}, first with respect to all the divisorial discrete valuations on $\sgf$ (in Section~\ref{all vals}) and then with respect to those that are trivial on the underlying complete discretely valued field $\dvf$ (in Section~\ref{gen fiber vals}). 

\medskip

\paragraph{\bf Acknowledgements}
The first author was partially supported by NSF collaborative FRG grant DMS-146373 and NSF grant DMS-1805439.  The second author was partially supported by NSF RTG grant DMS-1344994, 
NSF collaborative FRG grant DMS-1463901, and NSF grant DMS-1902144.
The third author was partially
supported by NSF grant DMS-1601680, by ANR grant ANR-15-CE40-0002-01, and by the Laboratory of Mirror
Symmetry NRU HSE, RF Government grant, ag.\ no.\ 14.641.31.0001.  

We thank R.~Parimala and V.~Suresh for providing us with the proof of  Theorem~\ref{sha mixed thm}, which strengthens a result in \cite{HHK:LGGC}, and is used in what follows.  
We thank 
J.-L.~Colliot-Th\'el\`ene for several discussions, and for comments on the manuscript that allowed us to generalize some of our results. 
We thank D.~Harari, J.~Hartmann, and K.~Smith for discussions about material in this paper.

\section{Definitions and Basic Examples} \label{setup}

Let  $\sgf$ be a field and let $\sgf_v, v\in \Omega$ be a collection of
overfields $\sgf_v\supseteq \sgf$. If $\fy/\sgf$ is a finite extension we set
$\fy_v=\fy\otimes_\sgf \sgf_v$ (which is not necessarily a field).  On the other hand, if $\gy/\sgf$ is a
geometrically integral algebraic variety and $\fy=\sgf(\gy)$, we set $\fy_v$  the
function field of the variety $\gy_{\sgf_v}$. For $\mu$ a $\Gal(\bar \fy/\fy)$-module
we set
\begin{equation}\label{Shagen}
\Sha^n_{\Omega}(\fy,\mu)=\ker\left[H^n(\fy, \mu)\to \prod_{v\in \Omega} H^n(\fy_v,
\mu)\right].
\end{equation}

\newcommand{\Jac}{\operatorname{Jac}}
We are interested  to determine this group, and, in particular, to determine whether or not this group is trivial for certain fields $\sgf$ and collections $\sgf_v, v\in \Omega$ of overfields. Our guiding principle is to understand how this group should depend on the quantities $n$, $i=\cd\,\sgf$, and $d=\dim\, \gy$.  We set $d=0$ if $\fy$ is a finite extension of $\sgf$.
We will eventually be mostly interested in the case where $\sgf$ is a function field in one variable over a complete discretely valued field, with a set of overfields described in Section~\ref{ccl} below, and $d=1$; that is, when $\fy = \sgf(\gy)$ for some
curve $\gy/\sgf$.

To begin investigating properties of the groups $\Sha^n_{\Omega}(\fy,\mu)$, we first consider some examples:
\subsection{Number fields} \label{snt}
Let $\sgf$ be a number field, and let
$\Omega$ be the set of
places on $\sgf$; the completion of $\sgf$ at $v \in \Omega$ will be denoted by $\sgf_v$.
We recall a few known cases of local-global principles for particular values of $d, n, \mu$. Let $\ell$ be a prime.

\begin{example} {$d = 0, n = 2, \mu = \mu_\ell$:}\\
As $d = 0$, it follows that $\fy$ is a finite
		extension of $\sgf$. We may identify $H^2(\fy, \mu_\ell)$ with $\Br(\fy)[\ell]$.
		In this case, $\Sha^2_{\Omega}(\fy,\mu_{\ell})=0$ 
		by the Albert-Brauer-Hasse-Noether theorem.
\end{example}

\begin{example} {$d = 1, n = 3, \mu = \mu_\ell^{\otimes 2}$:}\\
		Here, writing $\fy = \sgf(\gy)$ for a curve $\gy/\sgf$, Kato shows (in \cite[Theorem~0.8(2)]{Kato}) that $\Sha^3_{\Omega}(\fy,\mu_{\ell}^{\otimes 2}) = 0$.
\end{example}

\begin{example} {$d = 1, n = 2, \mu = \mu_\ell$:}\\
Write $\fy=\sgf(\gy)$ with $\gy$  a smooth projective curve over $\sgf$, and assume $\gy(\sgf) \neq \emptyset$.  In this case, $\Sha^2_{\Omega}(\fy, \mu_\ell)$ need not vanish.  To see this, first note that
$\Sha^2_{\Omega}(\fy, \mu_\ell) \subseteq H^2(\fy, \mu_\ell) = \Br(\fy)[\ell]$ and consider the following  commutative diagram
		\[\xymatrix{
	&		\Br(\fy)[\ell] \ar[r] \ar[d] & \coprod_{y \in \gy^{(1)}} H^1(\kappa(y), \mu_\ell) \ar[d] \\
	&		\prod_{v \in \Omega} \Br(\fy_v)[\ell] \ar[r] & \prod_v \coprod_{z\in \gy_{\sgf_v}^{(1)}} H^1(\kappa(z), \mu_\ell).
&		}\]
If $L = \kappa(y)$ is the residue field of a closed point $y$ of $\gy$, then 
$L/\sgf$ is a finite extension and any nontrivial element of $H^1(L, \mu_\ell)$  must be nontrivial locally at infinitely many places $v \in \Omega$ by the Grunwald-Wang Theorem (or by application of Chebotarev density). We deduce that any class $\alpha\in  \Sha^2_{\Omega}(\fy, \mu_\ell)$ lies in the subgroup $\Br(\gy)[\ell] \subseteq \Br(\fy)[\ell]$ of unramified elements. It follows that
		$$\Sha^2_{\Omega}(\fy, \mu_\ell) = \ker ( \Br(\gy)[\ell] \to \prod_v \Br(\gy_v)[\ell]). $$
	Since $\gy(\sgf)\neq \emptyset$, we have  $\Br(\gy) = \Br(\sgf) \oplus H^1(\sgf, \Jac(\gy))$ from the Hochschild-Serre spectral sequence.
Since $\Sha^2_{\Omega}(\sgf, \mu_\ell) = 0$, it follows that $\Sha^2_{\Omega}(\fy, \mu_\ell) = \Sha^1_{\Omega}(\sgf, \Jac(\gy))[\ell]$ is the $\ell$-torsion of the (classical) Tate-Shafarevich group of the Jacobian of $\gy$.  In particular, it can be non-trivial; e.g., if $\gy$ is the Jacobian of the Selmer curve $3x^3+4y^3+5z^3$ over $\sgf=\Q$, and $\ell=3$.
\end{example}

\subsection{Curves over complete fields}\label{ccl}
We now recall some local-global principles in the context of semi-global fields.  

Let $\dvf$ be the fraction field of a discrete valuation ring $\dvr$, and let 
$\sgf$ be the function field of an absolutely irreducible $\dvf$-variety.
Equivalently, $\sgf$ is a finitely generated field extension of $\dvf$ in which $\dvf$ is algebraically closed.  A normal (resp.\ regular) {\em model} of $\sgf$ is an integral $\dvr$-scheme $\sgm$ with function field~$\sgf$ that is flat and projective over~$\dvr$ of relative dimension $\td_\dvf(\sgf)$, and that is normal (resp.\ regular) as a scheme.  A normal model always exists; e.g., the normalization of a projective closure of an affine $\dvr$-variety with function field $\sgf$.  

In particular, consider the case of transcendence degree one; i.e., where $\sgm$ is a relative $\dvr$-curve.  A regular model of $\sgf$ then exists if $\dvr$ is excellent, by the main theorem of \cite{Lip78}.  If $\dvr$ is complete then $\dvr$ is excellent, and so there is a regular model of $\sgf$.  
In this situation, where $\dvr$ is complete and $\td_\dvf(\sgf)=1$,   
we call $\sgf$ a {\em semi-global} field over $\dvf$.

Let $m$ be a positive integer that is not divisible by the
characteristic of $\dvf$.
We consider the following two situations:

\subsubsection{Local-global principle with respect to scheme theoretic points of the closed fiber} \label{lgp pts}
Let $\Omega$ be the set of scheme-theoretic points on the closed fiber of $\sgm$.
For $x \in \Omega$, let $\sgf_x =
\frf(\wh {\ms O}_{\sgm, x})$ be the fraction field of the
complete local ring at $x$; this is an overfield of $K$.
If $n \ge 2$, then the associated kernel $\Sha_{\Omega}^n(\sgf, \mu_m^{\otimes n-1})$
is trivial by \cite[Theorem~3.2.3(i)]{HHK:LGGC}.  On the other hand if $n=1$, it follows from \cite[Corollary~6.5]{HHK:H1} that $\Sha_{\Omega}^1(\sgf,\Z/m\Z)$ is finite but not necessarily trivial.

\subsubsection{Local-global principle with respect to divisorial discrete valuations}
\label{lgp dvr}

If $\dvf$ is of equal characteristic, and $\Omega = \sgm^{(1)}$  is the set of codimension $1$ points of $\sgm$, then
$\Sha_{\Omega}^n(\sgf, \mu_m^{\otimes n-1}) = 0$,
for $n\geq 2$, by \cite[Theorem~3.3.6]{HHK:LGGC}.  This result has now been shown to carry over to assertions in mixed characteristic as well.  In particular, see Theorem~\ref{sha mixed thm} below, which was shown by R.~Parimala and V.~Suresh, and which we use in the sequel.  Other extensions have been announced as well (see \cite{Sakagaito}). However, the group $\Sha_{\Omega}^2(\sgf, \mu)$ is not necessarily trivial for $\mu$ a finite Galois module (different from $ \mu_m^{\otimes n-1}$) and $\sgf$ the function field of a curve over a local field (see \cite[Corollaire~5.3(iii)]{CPS16}). Concerning the case of $n=1$, 
$\Sha_{\Omega}^1(\sgf,\Z/m\Z)$ is not necessarily trivial (see \cite[Section 6]{CPS}); moreover, it is equal to the corresponding obstruction group in \ref{lgp pts} above, by \cite[Proposition~5.1, Corollary~5.3]{HHK:H1}.

\section{Function fields of rational curves} \label{lemmas}

Let $\sgf$ be a field, and let $\mu$ be a finite commutative \'etale group scheme over $\sgf$, whose order is prime to the characteristic of $\sgf$.  
In Proposition~\ref{ShaD} below, we relate $\Sha^n_{\Omega}(\sgf,\mu)$ to $\Sha^n_{\Omega}(\fy,\mu)$, where $\fy=\sgf(t)$ and where $\Omega$ is a collection of overfields of $\sgf$.
First we prove a general lemma.

\begin{lemma} \label{inseparably ramified}
Let $L/\sgf$ be an arbitrary field extension, and let $\fy$ be the function field of
a smooth, geometrically integral curve $\gy/\sgf$. Let $v$ be a valuation on $\fy$ corresponding to a closed point $P$ on $\gy$, and let $Q$ be a closed point of $\gy_L := \gy \times_\sgf L$ above $P$. Then we have an inclusion of discrete valuation rings $\ms O_{\gy, P} \subseteq \ms O_{\gy_L, Q}$, and the ramification index (i.e., valuation in $\ms O_{\gy_L, Q}$ of a uniformizer of $\ms O_{\gy, P}$) is equal either to $1$, or if $\sgf$ has finite characteristic $p$, to a power of $p$.
\end{lemma}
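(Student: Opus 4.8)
The plan is to express the ramification index as a length, reinterpret that length in terms of the Artinian local ring obtained by reducing $\kappa(P)\otimes_\sgf L$, and then analyze that ring by splitting off the separable and purely inseparable parts of the residue extension $\kappa(P)/\sgf$.

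First I would record the basic local algebra. Since $\gy/\sgf$ is smooth of relative dimension $1$, the scheme $\gy$ is regular, and its base change $\gy_L := \gy\times_\sgf L$ is again smooth over $L$ of relative dimension $1$, hence regular; it is moreover irreducible because $\gy$ is geometrically integral. Therefore $A := \ms O_{\gy,P}$ and $B := \ms O_{\gy_L,Q}$ are discrete valuation rings. The projection $\gy_L\to\gy$ carries $Q$ to $P$ and is flat (being a base change of $\Spec L\to\Spec\sgf$), so the induced local homomorphism $A\to B$ is faithfully flat, in particular injective; this gives the asserted inclusion of DVRs. If $\pi$ is a uniformizer of $A$ and $e=v_B(\pi)$ denotes its valuation in $B$, then $B/\pi B\cong B/\mathfrak m_B^{\,e}$, which has length $e$ over $B$ (equivalently over itself). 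Passing to an affine neighborhood of $P$ and localizing, one identifies $B$ with $(A\otimes_\sgf L)_{\mathfrak q}$ for the prime $\mathfrak q$ corresponding to $Q$, and hence
\[ B/\pi B \;\cong\; \bigl((A/\pi A)\otimes_\sgf L\bigr)_{\bar{\mathfrak q}} \;\cong\; \bigl(\kappa(P)\otimes_\sgf L\bigr)_{\bar{\mathfrak q}} \;=:\; R . \]
It therefore suffices to prove that the length of the Artinian local ring $R$ over itself is $1$, or a power of $p$ when $\cha\sgf=p>0$.

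Next I would unravel $\kappa(P)\otimes_\sgf L$. Write $k=\kappa(P)$ and let $k_s$ be the separable closure of $\sgf$ in $k$, so $k_s/\sgf$ is finite separable and $k/k_s$ is purely inseparable (and $k=k_s$ in characteristic $0$). Then $k_s\otimes_\sgf L$ is a finite \'etale $L$-algebra, hence a finite product $\prod_j M_j$ of field extensions of $L$, so $k\otimes_\sgf L\cong\prod_j(k\otimes_{k_s}M_j)$. Each factor $k\otimes_{k_s}M_j$ is local Artinian: writing $k$ as generated over $k_s$ by finitely many elements purely inseparable over $k_s$, a sufficiently large $p$-power of any element of $k\otimes_{k_s}M_j$ lies in $1\otimes M_j$ and is thus either $0$ or a unit. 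These factors are precisely the localizations of $k\otimes_\sgf L$ at its maximal ideals, so $R\cong k\otimes_{k_s}M_{j_0}$ for the relevant index $j_0$, and $\dim_{M_{j_0}}R=[k:k_s]$, which is $1$ or a power of $p$.

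Finally I would compare dimensions. Let $F=R/\mathfrak m_R=\kappa(Q)$ be the residue field of $R$; the same Frobenius observation shows $F/M_{j_0}$ is purely inseparable, so $[F:M_{j_0}]$ is $1$ or a power of $p$, and $[F:M_{j_0}]=\dim_{M_{j_0}}F\le\dim_{M_{j_0}}R=[k:k_s]$. Reading off a composition series of $R$ over itself then gives $e=\operatorname{length}_R(R)=\dim_{M_{j_0}}(R)/[F:M_{j_0}]$, which is $1$ or a power of $p$, and equals $1$ when $\cha\sgf=0$. I expect the one step needing genuine care --- the main obstacle --- to be the analysis of $\kappa(P)\otimes_\sgf L$ in the inseparable case: one must check that the local factors have self-length a power of $p$, which is exactly where the separable/purely-inseparable decomposition and the inequality $[F:M_{j_0}]\le[k:k_s]$ come in.
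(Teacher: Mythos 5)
Your proof is correct, but it follows a genuinely different route from the paper's. The paper first treats $\gy=\mathbb P^1_\sgf$ explicitly: the point $P$ is a monic irreducible polynomial $f\in\sgf[t]$, and the ramification indices $r_j$ are read off from the factorization of $f$ in $L[t]$, which is compared with the factorization over $\bar\sgf$, where irreducibility of $f$ over $\sgf$ forces every root to occur with multiplicity $1$ or a power of $p$; the general curve is then reduced to this case by mapping to $\mathbb P^1_\sgf$ via a uniformizer at $P$ (a morphism \'etale at $P$) and comparing ramification in the resulting pullback square. You instead argue purely locally: you identify $e$ with $\operatorname{length}_R(R)$ for $R=\bigl(\kappa(P)\otimes_\sgf L\bigr)_{\bar{\mathfrak q}}\cong \ms O_{\gy_L,Q}/\pi$, decompose $\kappa(P)\otimes_\sgf L$ using the separable closure $k_s$ of $\sgf$ in $\kappa(P)$ (so that $k_s\otimes_\sgf L$ is \'etale and the local factors are the purely inseparable pieces $\kappa(P)\otimes_{k_s}M_j$), and then get $e=[\kappa(P):k_s]/[\kappa(Q):M_{j_0}]$ from a composition series, a ratio of $p$-powers that is an integer. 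Your Frobenius argument that each factor $\kappa(P)\otimes_{k_s}M_j$ is local with purely inseparable residue extension, and the length-versus-dimension bookkeeping, are all sound. What your approach buys is that it avoids the reduction to $\mathbb P^1$ and the global polynomial-factorization step, works intrinsically at the two local rings (so it visibly only uses smoothness through regularity of $\gy$ and $\gy_L$ and flatness of the projection), and in fact yields the sharper identity $e\cdot[\kappa(Q):M_{j_0}]=[\kappa(P):k_s]$; what the paper's proof buys is elementarity, resting only on factorization of polynomials and the behavior of \'etale maps under base change.
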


\begin{proof}
First consider the case in which $\gy = \mathbb P^1_\sgf$, so that $\fy=\sgf(t)$.  The assertion is trivial if $P$ is the point at infinity, so assume that $P$ is a closed point on the affine line.  Thus $P$ is given by a monic irreducible polynomial
$f(t) \in \sgf[t]$.  Let the irreducible factorization of $f(t)$ in $L[t]$ be
$\prod_j f_j(t)^{r_j}$ for some distinct monic irreducible polynomials $f_j(t) \in L[t]$ and some non-negative integers $r_j$.  The integers $r_j$ are the ramification indices of the points $Q_j \in \gy_L$ that lie over $P \in \gy$.  We want to show that each $r_j$ either equals $1$ or is a power of $p$ if $\sgf$ has characteristic $p>0$.

Let $\bar L$ be an algebraic closure of $L$, and let $\bar\sgf$ be the algebraic closure of $\sgf$ in $\bar L$.
Since $f$ is irreducible over $\sgf$, it factors over $\bar\sgf[t]$ as
$\prod_i (t-a_i)^{s_i}$ for some distinct elements $a_i \in \bar \sgf$ and some non-negative integers $s_i$ that are each equal to $1$ or a power of $p = \cha(\sgf)$.  So this is also the irreducible factorization of $f(t)$ in $\bar L[t]$.

For each $j$, the roots of $f_j(t)$ in $\bar L$ are a subset of the roots of $f$ in $\bar L$, viz. the elements $a_i$.  Choose one of these roots $a_i$ of $f_j(t)$, say with multiplicity $m_i$ in $f_j$.  So $f_j(t)$ is the minimal polynomial of $a_i$ over $L$, and $a_i$ is not the root of any other $f_{j'}$. The multiplicity of $a_i$ as a root of $f(t) = \prod_j f_j(t)^{r_j}$ in $\bar L$ is thus $m_ir_j$.  Thus $m_ir_j = s_i$, and so $r_j$ is equal either to $1$ or to a power of $p$.  This completes the proof in the case that $\gy = \mathbb P^1_\sgf$.

For the general case, since $\gy$ is smooth over $\sgf$, there is a uniformizer on $\gy$ at $P$.  This defines a morphism $\pi:\gy \to \mathbb P^1_\sgf$ that is \'etale at $P$, so that the ramification index of $P$ over the point $P' = (t=0)$ on $\mathbb P^1_\sgf$ is equal to $1$.  We then have a pullback diagram
		\[\xymatrix{
			\gy_L \ar[r]^{\pi'} \ar[d]^{\sigma'} & \mathbb P^1_L \ar[d]^{\sigma}    \\
           \gy \ar[r]^\pi & \mathbb P^1_\sgf \ .
		}\]
For any point $Q \in \gy_L$ over $P \in \gy$, its image $Q'$ in $\mathbb P^1_L$ lies over $P'$.  So by the above special case, the ramification index of $\sigma$ at $Q'$ is either $1$ or a power of $p$.  And since $\pi$ is \'etale at $Q$, its pullback $\pi'$ is \'etale at $Q'$; and so $\pi'$ has ramification index $1$ there.  Thus the ramification index of $\pi\sigma' = \sigma\pi'$ at $Q$ is also of this form.  Hence so is the ramification index of $\sigma'$ at $Q$, as asserted.
\end{proof}

Turning to our situation, recall that there is
an exact sequence
$$0 \to H^n(\sgf, \mu)\to H^n(\sgf(\mathbb P^1), \mu) \to \coprod_{y \in
(\mathbb P^1_\sgf)^{(1)}} H^{n-1}(\kappa (y), \mu(-1)) \to H^{n-1}(\sgf,
\mu(-1)) \to 0,$$
where the second map is the sum of the residue maps, and where the map on the right is the sum of the corestriction
maps (e.g., see \cite[Part 1, Theorem~9.2]{GMS:CohInv}; or
\cite[Theorem~6.9.1]{GS:CSAGC} in case $\mu = \mu_m^{\otimes r}$). We will refer to this as the {\em Faddeev sequence}.

Choosing the point at infinity defines a splitting of this
rightmost map, and one obtains a short exact sequence: \[0 \to H^n(\sgf, \mu)
\to H^n(\sgf(\mathbb P^1), \mu) \to \coprod_{y \in \mathbb A^{1\,(1)}_\sgf}
H^{n-1}(\kappa(y),
\mu(-1)) \to 0.\]
We then have the following proposition for a purely transcendental extension $\fy$ of a field $\sgf$.

\begin{prop}\label{ShaD}{Let $\sgf$ be a field; let $\sgf_v, v\in \Omega,$ be a
collection of overfields of $\sgf$; and let $\fy=\sgf(t)$ be a purely transcendental extension of $\sgf$.  Let $\mu$ be a finite commutative \'etale group scheme over $\sgf$, whose order is prime to the characteristic of $\sgf$. Take  $n\geq 1$ an integer.  We
have the following exact sequence:
$$0\to \Sha^n_{\Omega}(\sgf,\mu)\to \Sha^n_{\Omega}(\fy,\mu)\to \coprod_{y\in
\mathbb A^{1\,(1)}_\sgf} \Sha^{n-1}_{\Omega} (\kappa(y),\mu(-1)) \to 0.$$
}
\end{prop}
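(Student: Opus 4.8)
The plan is to compare the split Faddeev sequences for $\sgf(t)$ and for each $\sgf_v(t)$ and extract the $\Sha$ subgroups by a diagram chase. First I would observe that the Faddeev sequence is functorial in the base field in the following sense: for each $v \in \Omega$ there is a commutative diagram whose rows are the split short exact sequences
\[
0 \to H^n(\sgf,\mu) \to H^n(\sgf(t),\mu) \to \coprod_{y \in \mathbb A^{1\,(1)}_\sgf} H^{n-1}(\kappa(y),\mu(-1)) \to 0
\]
and
\[
0 \to H^n(\sgf_v,\mu) \to H^n(\sgf_v(t),\mu) \to \coprod_{z \in \mathbb A^{1\,(1)}_{\sgf_v}} H^{n-1}(\kappa(z),\mu(-1)) \to 0,
\]
with vertical maps induced by $\sgf \hookrightarrow \sgf_v$. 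The compatibility of the residue maps with base change is standard, and the left-hand square commutes because restriction commutes with inflation; the splitting by the point at infinity is visibly compatible. Taking the product over $v$ of the bottom rows, and then the kernels of the vertical maps, one gets a left-exact sequence
\[
0 \to \Sha^n_\Omega(\sgf,\mu) \to \Sha^n_\Omega(\sgf(t),\mu) \to \ker\Big[\coprod_{y} H^{n-1}(\kappa(y),\mu(-1)) \to \prod_v \coprod_{z} H^{n-1}(\kappa(z),\mu(-1))\Big].
\]
So the two things to prove are: (a) the rightmost map here is surjective onto the claimed group $\coprod_y \Sha^{n-1}_\Omega(\kappa(y),\mu(-1))$ — this uses the splitting and the snake lemma — and (b) that kernel actually equals $\coprod_y \Sha^{n-1}_\Omega(\kappa(y),\mu(-1))$, i.e.\ one may compute the obstruction on $\sgf(t)$ "term by term" in the residue decomposition.

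For (a): because the top row is split by the section at infinity, the surjectivity of $H^n(\sgf(t),\mu)\to \coprod_y H^{n-1}(\kappa(y),\mu(-1))$ restricts to surjectivity after intersecting with the $\Sha$ groups, provided the section respects the local conditions — which it does, since the section at infinity is itself defined over $\sgf$ and hence compatible with all the base changes to $\sgf_v$. Concretely, given a class in the target that dies in every $\coprod_z H^{n-1}(\kappa(z),\mu(-1))$, lift it via the section to $H^n(\sgf(t),\mu)$; its image in each $H^n(\sgf_v(t),\mu)$ lies in the image of the (compatible) section over $\sgf_v$ of a class that is zero, hence is zero, so the lift lies in $\Sha^n_\Omega(\sgf(t),\mu)$.

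For (b), the main point: I must identify $\ker\big[\coprod_y H^{n-1}(\kappa(y),\mu(-1)) \to \prod_v \coprod_z H^{n-1}(\kappa(z),\mu(-1))\big]$ with $\coprod_y \Sha^{n-1}_\Omega(\kappa(y),\mu(-1))$. The subtlety is that a single closed point $y \in \mathbb A^1_\sgf$ can split in $\mathbb A^1_{\sgf_v}$ into several points $z$, and its residue field $\kappa(z)$ is a factor of $\kappa(y)\otimes_\sgf \sgf_v$; moreover $\kappa(z)/\kappa(y)$ need not be the base change of $\sgf_v/\sgf$ in the naive sense. This is exactly the content of the notion "$\fy_v$" from Section~\ref{setup}: for the finite extension $\kappa(y)/\sgf$ we set $\kappa(y)_v = \kappa(y)\otimes_\sgf \sgf_v$, and $H^{n-1}(\kappa(y)_v,\mu(-1)) = \prod H^{n-1}(\kappa(z),\mu(-1))$ over the points $z$ above $y$ — and this is precisely the definition of $\Sha^{n-1}_\Omega(\kappa(y),\mu(-1))$, with the caveat that $\mu(-1)$ on $\kappa(y)$ is the restriction of $\mu(-1)$ on $\sgf$, so the Galois module used is the correct one. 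So the identification is essentially definitional once one matches up the decomposition of $\kappa(y)\otimes_\sgf\sgf_v$ into residue fields of points of $\mathbb A^1_{\sgf_v}$ lying over $y$. One mild wrinkle I would address: since $\mu$ is étale with order prime to $\cha(\sgf)$, the extension $\kappa(y)/\sgf$ may be inseparable, and a priori $\kappa(y)\otimes_\sgf\sgf_v$ could be non-reduced or a point could have inseparable residue extension — but Lemma~\ref{inseparably ramified} shows the ramification is prime-to-$m$ (a power of $p$), so on $\mu(-1)$-cohomology the corestriction/restriction maps behave as in the tame case and the reduced-quotient identification costs nothing at the level of $H^*(-,\mu(-1))$; I would remark this rather than belabor it.

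The step I expect to be the genuine obstacle is verifying that the horizontal "residue" map genuinely carries $\Sha^n_\Omega(\sgf(t),\mu)$ into $\coprod_y \Sha^{n-1}_\Omega(\kappa(y),\mu(-1))$ and no larger group — equivalently, that if $\alpha \in H^n(\sgf(t),\mu)$ becomes zero in every $H^n(\sgf_v(t),\mu)$, then for each $y$ the residue $\partial_y(\alpha) \in H^{n-1}(\kappa(y),\mu(-1))$ already becomes zero in $H^{n-1}(\kappa(y)_v,\mu(-1))$ for every $v$. This is where functoriality of residues under the base change $\sgf(t)\to\sgf_v(t)$ must be applied carefully, keeping track of the combinatorics of points above $y$; once that is in hand, both inclusions between the kernel and $\coprod_y\Sha^{n-1}_\Omega$ follow, and combined with (a) the proposition is proved.
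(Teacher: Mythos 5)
Your setup coincides with the paper's: the same ladder of split Faddeev sequences over $\sgf$ and over each $\sgf_v$, with the right-hand vertical map being pullback twisted by the ramification indices (this twist is needed for the ladder to commute, which your first diagram glosses over but you acknowledge later), the use of Lemma~\ref{inseparably ramified} to see that these indices are prime to the order of $\mu$ and hence that the kernel of that vertical map is exactly $\coprod_y \Sha^{n-1}_{\Omega}(\kappa(y),\mu(-1))$, and left-exactness of the resulting sequence of kernels. Incidentally, the step you single out at the end as the "genuine obstacle" --- that the residue map carries $\Sha^n_{\Omega}(\fy,\mu)$ into the coproduct of $\Sha^{n-1}$'s --- is immediate from commutativity of the ladder once that kernel has been identified; it is not where the work lies.

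The genuine gap is in your step (a), the surjectivity. You invoke a section of $\phi:H^n(\fy,\mu)\to\coprod_y H^{n-1}(\kappa(y),\mu(-1))$ "at infinity" compatible with every base change $\sgf\to\sgf_v$, but no such section is constructed, and the obvious candidate does not literally make sense: choosing the point at infinity splits the corestriction map at the right end of the four-term Faddeev sequence (this is how the short exact sequence is obtained), not the residue map $\phi$; and "evaluation at infinity" is not defined on a lift $\alpha$ of $\beta$, since such a lift is in general ramified at infinity (its residue there is minus the sum of the corestrictions of the components of $\beta$). Producing a retraction of $H^n(\sgf,\mu)\hookrightarrow H^n(\fy,\mu)$, or equivalently a section of $\phi$, commuting with all the restrictions to the $\sgf_v$'s is precisely the content that must be proved here, and your proposal assumes it. The paper avoids the issue by a per-class correction: lift $\beta$ to any $\alpha\in H^n(\fy,\mu)$, choose a rational point $p\in\mathbb A^1(\sgf)$ outside the support of $\beta$ (so $\alpha$ is unramified at $p$ and its specialization $\alpha(p)$ is defined), and set $\alpha'=\alpha-\alpha(p)_\fy$; over each $\sgf_v$ the class $\alpha'$ has trivial residues, hence comes from $H^n(\sgf_v,\mu)$ and is determined by its specialization at $p$, which vanishes, so $\alpha'\in\Sha^n_{\Omega}(\fy,\mu)$ and $\phi(\alpha')=\beta$. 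Your argument could be repaired either this way or by defining the "value at infinity" via the decomposition of $H^n$ of the completion at infinity with uniformizer $1/t$ (which is base-change compatible), but as written the existence and compatibility of the section is asserted rather than proved.
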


\begin{proof}
Applying the splitting of the Faddeev sequence to the fields $\sgf$ and $\sgf_v$, $v\in \Omega$,  we obtain a commutative diagram:
\begin{equation}
\label{dFaddeev}
\xymatrix@C=0.7cm{
	0\ar[r]& H^n(\sgf, \mu)\ar[d]\ar[r] & H^n(\fy, \mu)\ar[r]^(0.3){\phi}\ar[d]&\coprod_{y\in
	(\mathbb A^1_\sgf)^{(1)}}  H^{n-1}(\kappa(y), \mu(-1))\ar[d] \ar[r] & 0\\
	0\ar[r]& \prod_{v} H^n(\sgf_v, \mu)\ar[r]& \prod_v H^n(\fy_v, \mu)\ar[r]&
	\prod_v\coprod_{z\in (\mathbb A^1_{\sgf_v})^{(1)}} H^{n-1}(\kappa(z), \mu(-1)) \ar[r] & 0,}
\end{equation}
where $\fy_v=\sgf_v(t)$, and the rightmost vertical map is defined as follows:  For a closed point $y \in \mathbb A^1_\sgf$, and $v \in \Omega$, we
may consider the finite set of closed points of $\mathbb A^1_{\sgf_v}$ lying
over $y$. These correspond to the maximal ideals of the ring $\kappa(y)
\otimes_{\sgf} \sgf_v$. 
For each such point $z \in \mathbb A^1_{\sgf_v}$, we
have an extension of local rings $\ms O_{\mbb A^1_{\sgf_v}, z}/\ms O_{\mbb A^1_{\sgf}, y}$. 
We set $e_z$ to be the ramification index of this extension. We then
define the rightmost vertical map to be the sum of the maps $H^{n-1}(\kappa(y),
\mu(-1)) \to H^{n-1}(\kappa(z), \mu(-1))$ given by the  natural pullback and
multiplication by $e_z$. The fact that the diagram~(\ref{dFaddeev}) commutes
follows from \cite[Proposition~8.2]{GMS:CohInv}. By Lemma~\ref{inseparably ramified}, each ramification degree $e_z$ is a power of the characteristic, and since this is prime to the order of $\mu$, it follows that the kernel of this map is the same as the kernel of the natural pullback maps.

{From} this, we deduce a short exact sequence of kernels
$$0\to   \Sha^n_{\Omega}(\sgf,\mu)\to \Sha^n_{\Omega}(\fy,\mu)\to \coprod_{y\in
\mathbb A^{1\,(1)}_\sgf} \Sha^{n-1}_{\Omega} (\kappa(y),\mu(-1)).$$
It remains to show that the rightmost map  is surjective.

Choose $\beta\in \coprod \Sha^{n-1}_{\Omega} (\kappa(y),\mu(-1))$ and $\alpha\in H^n(\fy, \mu)$ with $\phi(\alpha)=\beta$. Choose $p\in \mathbb A^1(\sgf)$ a rational point disjoint from the support of $\beta$. In particular, $\alpha$ is unramified at $p$. Put $\alpha'=\alpha-\alpha(p)_\fy$, where $\alpha(p)$ is the specialization of $\alpha$ at $p$ and $\alpha(p)_\fy$ is the pullback of the class $\alpha(p)$ via the inclusion $\sgf\subset \fy$.  Note that  $\phi(\alpha')=\beta$.
We claim that $\alpha'\in \Sha^n_{\Omega}(\fy,\mu)$. Let $\alpha'_{\fy_v}$ be the image of $\alpha'$ in $H^n(\fy_v, \mu)$. By the commutativity of the diagram (\ref{dFaddeev}), $\alpha'_{\fy_v}$ maps to zero in  $\coprod_{z\in (\mathbb A^1_{\sgf_v})^{(1)}} H^{n-1}(\kappa(z), \mu(-1))$, hence $\alpha'$ comes from a class in  $H^n(\sgf_v, \mu)$. In particular, it is determined by the value of its specialization at any $\sgf_v$-rational point. But its specialization at $p$ is $\alpha'(p)=\alpha(p)-\alpha(p)=0$, so that $\alpha'_{\fy_v}=0$ and $\alpha'\in \Sha^n_{\Omega}(\fy,\mu)$.
\end{proof}

\section{Curves over semi-global fields} \label{sgf section}

Let $\sgf$ be a semi-global field over a complete discretely valued field $\dvf$; i.e., the function field of a projective $\dvf$-curve (as always, assumed absolutely irreducible).  Let $\fy$ be the function field of a projective curve $\gy$ over $\sgf$.  In this situation, we can consider local-global principles for $\fy$ with respect to the valuations on $\sgf$, or with respect to those on $\fy$.  We relate the corresponding $\Sha$ groups in Propositions~\ref{comparisonHnr} and~\ref{Sha agree} below. 

In this section we will be interested in the following sets of valuations.  Let $\valsgf$ be the set of divisorial discrete valuations on $\sgf$; i.e.\ valuations corresponding to codimension one
points of some normal model of $\sgf$ over the valuation ring $\dvr$ of $\dvf$.  In fact, a regular model $
\sgm$ of $\sgf$ exists, as noted in Section~\ref{ccl}; and we write 
$\valsgm \subseteq \valsgf$ for the set of valuations on $\sgf$ corresponding to codimension one points on $\sgm$.  
For $v \in \valsgf$, let $\sgf_v$ be the completion of $\sgf$ at $v$ and let  $\fy_v=\fy\otimes_\sgf \sgf_v$.
Similarly, we let $\valfy$ be the set of divisorial discrete valuations on $\fy$, so that any $v\in \valfy$ corresponds to a codimension one point on some normal  model  of $\fy$ over $\dvr$. If $\my$ is a fixed such normal model of $\fy$, we write $\valmy \subseteq \valfy$ for the set of valuations on $\fy$ corresponding to codimension one points on $\my$.

\begin{notation} \label{notn}
For the rest of this paper we fix an integer $m>1$ prime to the residue characteristic of $\dvf$, and we let $\zeta_m$ be a primitive $m$-th root of unity in an algebraic closure of $\dvf$.  For any field $L$ containing $\dvf$ and any integer $n \ge 0$, we write $H^n(L) := H^n(L,\mu_m^{\otimes (n-1)})$.
With $\fy$ as above, we write
$$ H^n_{\nr}(\fy)=\bigcap_{v\in \valfy} \ker[\partial^n:  H^n(\fy)\to H^{n-1}(\kappa(v))]$$
for the $n$-th unramified cohomology of $\fy$ with coefficients in $\mu_m^{\otimes (n-1)}$.  For 
a normal proper model $\my$ of $\fy$ over $\dvr$ we define
$$ H^n_{\nr}(\fy/\my)=\bigcap_{v\in \valmy} \ker[\partial^n:  H^n(\fy)\to H^{n-1}(\kappa(v))],$$
which by definition contains $H^n_{\nr}(\fy)$.  
We also consider the following $\Sha$ groups:
$$\shafy n(\fy)= \Sha^n_{\valfy}(\fy,\mu_m^{\otimes (n-1)})
=\ker[H^n(\fy)\to \prod_{v\in\valfy}  H^n(\fy_v)],$$
$$\shasgf n(\fy) = \Sha^n_{\valsgf}(\fy,\mu_m^{\otimes (n-1)})
= \ker[H^n(\fy)\to \prod_{v\in\valsgf}  H^n(\fy_v)],$$
$$\shasgf n(\sgf) = \Sha^n_{\valsgf}(\sgf,\mu_m^{\otimes (n-1)})
= \ker[H^n(\sgf)\to \prod_{v\in\valsgf}  H^n(\sgf_v)].$$
\end{notation}

We have that $\shafy n(\fy)\subseteq H^n_{\nr}(\fy)$, since the residue map factors through the completion.

In this and the following section, we will want to use that $\shasgf n(\sgf)$ is trivial for $n>1$.  In  
Theorem~3.3.6 of \cite{HHK:LGGC}, this triviality was shown to hold if the underlying complete discretely valued field $\dvf$ is equicharacteristic; and moreover that $\Sha^n_{\valsgf}(\sgf,\mu_m^{\otimes r})$ vanishes for all $r$ if $[\sgf(\zeta_m) : \sgf]$ is prime to $m$.  
(In fact, it was shown there that for any regular model $\sgm$ of $\sgf$,
the corresponding {\em a priori} larger obstruction groups with respect to $\valsgm$ vanish.)  Several years ago, R.~Parimala and V.~Suresh proved that if $[\sgf(\zeta_m) : \sgf]$ is prime to $m$ then $\Sha^n_{\valsgf}(\sgf,\mu_m^{\otimes r})$ vanishes for all $r$, even if $\dvf$ is not assumed to be equicharacteristic.  Their argument has been unpublished until now (although it has been cited in the literature; e.g.\ \cite[p.~4372]{Hu}); and we thank them for allowing us to provide it here, in Lemma~\ref{la:coho cl van} and Theorem~\ref{sha mixed thm} below.

\begin{lemma}[Parimala-Suresh] \label{la:coho cl van}
Let $A$ be a complete two dimensional local ring with maximal ideal $\frak m = (\pi,\delta)$, residue field $\kappa$ and field of fractions $L$. Let $m$ be a positive integer that is invertible in $A$.  Let $\xi = \sum (a_{i1}) \cdot (a_{i2}) \cdots (a_{in}) \in H^n(L, \mu_m^{\otimes n})$ with $a_{ij} \in A$ representing $(a_{ij}) \in L^\times/(L^\times)^m = H^1(L,\mu_m)$. Suppose that $\pi$ and $\delta$ are the only primes that divide $a_{ij}$ for all $i,j$. If the image of $\xi$ is zero over $L_\pi$ and $L_\delta$, then $\xi$ is zero.
\end{lemma}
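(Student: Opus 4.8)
## Proof proposal

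The plan is to localize at the two height-one primes $(\pi)$ and $(\delta)$ of $A$ and exploit a purity/residue argument on the regular local ring $A$ (or rather on a suitable blow-up), reducing the vanishing of $\xi$ over $L$ to the hypothesized vanishing over $L_\pi$ and $L_\delta$. First I would recall that $A$, being a complete two-dimensional local ring, can be assumed regular for the purposes of this lemma — in fact the statement is cohomological and we may replace $A$ by a regular model; since $\mathfrak{m}=(\pi,\delta)$ is generated by two elements and $A$ is a two-dimensional complete local domain, after normalizing we may take $A$ regular with regular system of parameters $\pi,\delta$. The key input is the Bloch–Ogus/Kato-style complex: for the regular two-dimensional local ring $A$ there is a residue sequence relating $H^n(L)$ to $\coprod_{\mathrm{ht}\,1} H^{n-1}(\kappa(\mathfrak p))$ and then to $H^{n-2}(\kappa)$, and the classes supported only at $\pi$ and $\delta$ form a controlled subgroup.

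The key steps, in order: (1) Using the description of $\xi$ as a symbol in the $a_{ij}$, all divisible only by $\pi$ and $\delta$, compute the residue $\partial^n_{\mathfrak p}(\xi)$ at each height-one prime $\mathfrak p$ of $A$; this residue vanishes for all $\mathfrak p \notin \{(\pi),(\delta)\}$, so $\xi$ is unramified away from the two given divisors. (2) At $\mathfrak p = (\pi)$: the hypothesis that $\xi$ maps to zero in $H^n(L_\pi)$ forces $\partial^n_\pi(\xi) = 0$ in $H^{n-1}(\kappa(\pi))$, because the residue map factors through the completion $L_\pi$; similarly $\partial^n_\delta(\xi)=0$. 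Hence $\xi \in H^n_{\nr}(L/\Spec A)$, i.e. $\xi$ is in the image of $H^n(A)$ (here using purity for étale cohomology on the regular local scheme $\Spec A$ — the Gersten conjecture for étale cohomology, known in the relevant cases, or Kato's explicit results in low dimension). (3) Now $\xi$ comes from a class $\tilde\xi \in H^n(A)$; restrict to the closed point to get $\bar\xi \in H^n(\kappa)$. Since $\cd$ considerations or a specialization argument apply: restricting $\tilde\xi$ along a section or along $\Spec \kappa \hookrightarrow \Spec A$ and along the punctured spectrum, and using that $H^n(A) \xrightarrow{\sim} H^n(\kappa)$ by properness/henselian invariance (proper or smooth base change, or the rigidity/invariance of étale cohomology under the reduction $A \to \kappa$ for a complete local ring), we identify $\tilde\xi$ with $\bar\xi$. (4) Finally pin down that $\bar\xi = 0$: pass to $L_\pi$, whose residue field is $\kappa(\pi)$, a complete discretely valued field with residue field $\kappa$; the vanishing of $\xi$ over $L_\pi$ together with $\partial_\pi(\xi)=0$ gives via the split exact sequence $0 \to H^n(\kappa(\pi)) \to H^n(L_\pi) \to H^{n-1}(\kappa) \to 0$ that the "unramified part" of $\xi$ over $L_\pi$ — which is exactly the image of $\bar\xi$ — vanishes; but that image is injective (or at least $\bar\xi$ is detected) because $H^n(\kappa) \hookrightarrow H^n(\kappa(\pi))$ is split injective. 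Hence $\bar\xi = 0$, so $\tilde\xi = 0$, so $\xi = 0$.

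The main obstacle I expect is step (2)–(3): rigorously lifting $\xi$ from $H^n(L)$ to $H^n(A)$ once it is known to be unramified at every height-one prime. This is exactly a purity statement (a form of the Gersten/Bloch–Ogus conjecture) for the two-dimensional regular complete local ring $A$ with coefficients $\mu_m^{\otimes n}$, and one must be careful about mixed characteristic: if $A$ has residue characteristic $p > 0$ and $p \mid m$ there could be trouble, but the hypothesis that $m$ is invertible in $A$ rules that out, so the $\ell$-adic purity results (e.g. absolute purity, Gabber) apply prime-by-prime for each $\ell \mid m$. A secondary subtlety is justifying that we may assume $A$ regular with $\pi,\delta$ a regular system of parameters — one should observe that the symbol $\xi$ and the primality hypotheses are preserved under passing to a normalization or a sequence of blow-ups resolving $\Spec A$, and the local fields $L_\pi, L_\delta$ only get refined (the completed fields at the strict transforms of the divisors $\pi=0$, $\delta=0$), over which $\xi$ still vanishes by functoriality. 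Once regularity is in hand, steps (1) and (4) are formal manipulations with residue maps and the split Faddeev-type sequence for complete discretely valued fields, exactly as used elsewhere in the paper.
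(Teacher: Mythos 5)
The central step of your plan --- (2)--(3), where you lift the everywhere-unramified class $\xi$ to a class $\tilde\xi\in H^n_{\mathrm{et}}(A,\mu_m^{\otimes n})$ --- is exactly a Gersten/Bloch--Ogus exactness statement for \'etale cohomology over the two-dimensional regular local ring $A$, and this is not available by a routine citation in the setting where the lemma is actually needed. The whole point of this lemma in the paper is to treat the \emph{mixed characteristic} case (it is used to remove the equicharacteristic hypothesis from the local-global theorem of Harbater--Hartmann--Krashen), so the known equicharacteristic Gersten results do not help, and Gabber's absolute purity alone does not give the lifting: purity computes $H^{n+1}_{\mathfrak m}(A,\mu_m^{\otimes n})\simeq H^{n-3}(\kappa,\mu_m^{\otimes n-2})$, which is nonzero in general, so the localization sequence for the punctured spectrum $U=\Spec A\setminus\{\mathfrak m\}$ leaves a genuine obstruction to extending a class from $U$ to $\Spec A$ (and even producing a class on $U$ from unramifiedness at each height-one prime requires an argument, since injectivity of $H^n_{\mathrm{et}}(A_{\mathfrak p})\to H^n(L)$ for non-henselian DVRs and the gluing over $U$ are themselves Gersten-type inputs). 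Statements of the kind you want for two-dimensional complete/henselian local rings in mixed characteristic are the subject of recent and nontrivial work (e.g.\ the cited papers of Hu and of Sakagaito), which is precisely the machinery the Parimala--Suresh argument is designed to avoid; so ``purity/Gersten, known in the relevant cases'' is the gap, not a step. (A minor additional point: your worry about regularity is unnecessary --- a two-dimensional Noetherian local domain whose maximal ideal is generated by the two elements $\pi,\delta$ is automatically regular, with $\pi,\delta$ a regular system of parameters --- and the proposed fix by normalization or blow-ups would in any case not obviously preserve the hypothesis that only $\pi,\delta$ divide the $a_{ij}$.)

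The paper's proof gets the ``integral model'' of $\xi$ for free from the symbol presentation rather than from purity: writing each $a_{ij}=u\pi^{\epsilon}\delta^{\epsilon'}$ with $u\in A^\times$ and using bilinearity, one decomposes $\xi=\xi_0+\xi_1\cdot(\pi)+\xi_2\cdot(\delta)+\xi_3\cdot(\pi)\cdot(\delta)$ where $\xi_0,\xi_1,\xi_2,\xi_3$ are sums of symbols in units of $A$, hence classes in $H^*_{\mathrm{et}}(A)$. Then the residue at $(\pi)$ gives $\overline\xi_1-\overline\xi_3\cdot(\overline\delta)=0$ in $H^{n-1}(\kappa(\pi))$; a further residue at $\overline\delta$ on the complete discrete valuation ring $A/(\pi)$, together with the injectivity of $H^*_{\mathrm{et}}$ of a complete discrete valuation ring into the cohomology of its fraction field and rigidity for the complete local rings $A$ and $A/(\pi)$, kills $\xi_3$, then $\xi_1$, then (using $L_\delta$) $\xi_2$, and finally $\xi_0$. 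Your step (4) uses the same rigidity ideas at the end, but only after the lifting that your proposal does not justify; as written, the proposal does not prove the lemma.
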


\begin{proof}
Since each $a_{ij}$ is of the form $u\pi^\epsilon\delta^{\epsilon'}$ for some unit $u$ in $A$, $0 \le \epsilon,\epsilon' < m$ 
and $(ab) \cdot (c) = (a) \cdot (c) + (b) \cdot (c)$ and $(a) \cdot (-a) = 0$, it follows that
$$\xi = \xi_0+\xi_1 \cdot(\pi)+\xi_2 \cdot(\delta)+\xi_3 \cdot(\pi)\cdot(\delta),$$
where $\xi_0$ is a sum of symbols of the form $(t_1)\cdot(t_2)\cdots(t_{n})$; $\xi_1,\xi_2$ are sums of symbols of the form  $(u_1)\cdot(u_2)\cdots(u_{n-1})$; and $\xi_3$ is a sum of symbols
$(w_1)\cdots(w_{n-2})$, with $t_i,u_j,w_s$ units in $A$.
Since $m$ is a unit in $A$, we have $\xi_0 \in
H_{\rm et}^{n}(A,\mu_m^{\otimes n})$; $\xi_1,\xi_2 \in
H_{\rm et}^{n-1}(A,\mu_m^{\otimes n-1})$; and $\xi_3 \in H_{\rm et}^{n-2}(A,\mu_m^{\otimes n-2})$. Let $\partial_\pi : H^n(L,\mu_m^{\otimes n}) \to H^{n-1}(\kappa(\pi),\mu_m^{\otimes n-1})$ be the
residue homomorphism.  Then $\partial_\pi(\xi) = \overline\xi_1 - \overline\xi_3 \!\cdot\! (\overline\delta)$, where bar denotes the image in $A/(\pi)$.  The image of $\xi$ in $H^n(L_\pi, \mu_m^{\otimes n})$ is zero, and so $\partial_\pi(\xi) = \overline\xi_1 - \overline\xi_3 \!\cdot\! (\overline\delta) = 0$. Since $A/(\pi)$ is a complete discrete valuation ring with field of fractions $\kappa(\pi)$ and residue field $\kappa$, we have the residue homomorphism $\partial_{\overline\delta} : H^{n-1}(\kappa(\pi), \mu_m^{\otimes n-1}) \to H^{n-2}(\kappa, \mu_m^{\otimes n-2})$. Here
$\overline\delta$ is a parameter in $A/(\pi)$; so $\partial_{\overline\delta}(\overline\xi_1) = 0$ and $\partial_{\overline\delta}(\overline\xi_3\!\cdot\!(\overline\delta)) = \overline{\overline\xi_3}$, where $\overline{\overline\xi_3}$ denotes
the image in $A/\frak m=\kappa$.
But $\overline\xi_1-\overline\xi_3\!\cdot\!(\overline\delta)=0$, so $\overline{\overline\xi_3} =\partial_{\overline\delta}(\overline\xi_1-\overline\xi_3\!\cdot\!(\overline\delta))=0$. Since $A$ is a
complete regular local ring, we have $\xi_3 = 0$. The element $\overline\xi_1-\overline\xi_3\!\cdot\!(\overline\delta)$ is zero in $H^{n-1}(\kappa(\pi),\mu_m^{\otimes n-1})$, and so
$\overline\xi_1$ is zero in $H^{n-1}(\kappa(\pi),\mu_m^{\otimes n-1})$. Since $A/(\pi)$ is a complete discrete valuation ring with
field of fractions $\kappa(\pi)$, the map $H_{\rm et}^{n-1}(A/(\pi),\mu_m^{\otimes n-1}) \to H^{n-1}(\kappa(\pi),\mu_m^{\otimes n-1})$ is injective and hence $\overline \xi_1$
is zero in $H_{\rm et}^{n-1}(A/(\pi),\mu_m^{\otimes n-1})$. Since $\pi$ is in the maximal ideal of the complete local ring $A$, it follows that $\xi_1$ is zero. Similarly, using that the image of $\xi$ is zero in $L_\delta$, it follows that $\xi_2$ is zero. Finally, since $\xi_0$ is zero over $L_{\pi}$, we deduce that $\overline{\xi_0}$ is zero in $H_{\rm et}^{n}(A/(\pi),\mu_m^{\otimes n})\hookrightarrow H^{n}(\kappa(\pi),\mu_m^{\otimes n})$,  so that, similarly to the above, $\xi_0=0$.
  Hence $\xi = 0$.
\end{proof}

\begin{thm}[Parimala-Suresh] \label{sha mixed thm}
Let $\dvf$ be a complete discretely valued field with residue field $\kappa$, and let $\sgf$ be the function field of a regular curve over $\dvf$.
Let $m$ be a positive integer that is not divisible by the characteristic of $\kappa$, and assume that  $[\sgf(\zeta_m) : \sgf]$ is prime to $m$.
Then $\shasgf n(\sgf,\mu_m^{\otimes r}), n \ge 2$, is trivial for every integer $r$.  Equivalently,
the natural homomorphism
\[\iota:H^n(\sgf,\mu_m^{\otimes r}) \to \prod_{v \in \valsgf}  H^n(\sgf_v,\mu_m^{\otimes r})\]
is injective for $n \ge 2$.
\end{thm}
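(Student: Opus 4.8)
The plan is to reduce to the key case $r = n$ handled by Lemma~\ref{la:coho cl van}, and then to propagate vanishing along the filtration of cohomology by residues on a regular model. First I would reduce to the case where $m = \ell^s$ is a prime power, by a standard primary-decomposition argument on the $m$-torsion; then the hypothesis $[\sgf(\zeta_m):\sgf]$ prime to $m$ survives. Next, I would use a restriction-corestriction argument with respect to the extension $\sgf(\zeta_m)/\sgf$, whose degree is prime to $m$: the composite $\mathrm{cores}\circ\mathrm{res}$ is multiplication by that degree, hence an isomorphism on the $m$-primary part; and $\Sha$ is compatible with restriction and corestriction along this extension (each $v\in\valsgf$ extends to finitely many valuations on $\sgf(\zeta_m)$, which are again divisorial). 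So it suffices to prove the theorem after adjoining $\zeta_m$, i.e.\ I may assume $\mu_m^{\otimes r}\cong\mu_m^{\otimes n}$ non-canonically, and so it is enough to treat $H^n(\sgf,\mu_m^{\otimes n})$, equivalently to show $\iota$ is injective for the "diagonal twist."

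Now fix a regular model $\sgm$ of $\sgf$ over $\dvr$, a relative curve, so that $\valsgm\subseteq\valsgf$; it suffices to show $\iota$ is injective already with respect to $\valsgm$. Take $\xi\in H^n(\sgf,\mu_m^{\otimes n})$ lying in $\Sha^n_{\valsgm}$. The first substantive step is a \emph{reduction to symbols with controlled support}: by the Bloch--Kato/Merkurjev--Suslin-type norm-residue theorem (available since $\mu_m\subseteq\sgf$), $H^n(\sgf,\mu_m^{\otimes n})$ is generated by symbols $(a_1)\cdots(a_n)$. One enlarges $\sgm$ by blowing up so that the union of the supports of the divisors of all the $a_j$ occurring in a fixed symbol expression for $\xi$, together with the closed fiber, becomes a strict normal crossings divisor on $\sgm$; then at each codimension-one point each $a_j$ is a unit times a power of one of (at most two) local parameters. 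The second step is the local input: at a closed point $x$ of $\sgm$ lying on this divisor, the complete local ring $A = \wh{\ms O}_{\sgm,x}$ is a complete two-dimensional regular local ring with $\frak m = (\pi,\delta)$, and by hypothesis $\xi$ dies over $\sgf_{v}$ for the (one or two) divisorial valuations $v$ centered at the components through $x$; these are exactly $L_\pi$ and $L_\delta$ in the notation of Lemma~\ref{la:coho cl van} (using that $\sgf_v$ contains $\mathrm{frac}(\wh{\ms O}_{\sgm,x})_\pi$, so vanishing over $\sgf_v$ forces vanishing over $L_\pi$). Hence by Lemma~\ref{la:coho cl van}, $\xi$ maps to zero in $H^n(\mathrm{frac}(A))$ for every such $x$.

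It remains to deduce from this local triviality that $\xi = 0$ globally on $\sgf$. For this I would argue that $\xi$ is unramified on $\sgm$: at a codimension-one point $y$ of $\sgm$, pick a closed point $x$ on the closure of $y$ lying on the normal-crossings divisor; injectivity of $H^{n-1}_{\mathrm{et}}$ of a complete DVR into $H^{n-1}$ of its fraction field, combined with the vanishing of $\partial_y(\xi)$ after passing to $\wh{\ms O}_{\sgm,x}$, forces $\partial_y(\xi)=0$ already over $\kappa(y)$. Thus $\xi\in H^n_{\mathrm{nr}}(\sgf/\sgm) = H^n_{\mathrm{et}}(\sgm,\mu_m^{\otimes n})$ (purity for the regular scheme $\sgm$, $m$ invertible). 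Finally, a class in $H^n_{\mathrm{et}}(\sgm,\mu_m^{\otimes n})$ that vanishes in $\wh{\ms O}_{\sgm,x}$ for a closed point $x$ on the closed fiber vanishes: restrict to the closed fiber $\sgm_0$, whose cohomology injects into that of $\wh{\ms O}_{\sgm,x}$ via the local ring of $\sgm_0$ at $x$ and proper base change / rigidity for the henselian pair $(\wh{\ms O}_{\sgm,x},\frak m)$, and then $H^n_{\mathrm{et}}(\sgm,\mu_m^{\otimes n})\to H^n_{\mathrm{et}}(\sgm_0,\mu_m^{\otimes n})$ is an isomorphism by proper base change since $\dvr$ is complete. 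Hence $\xi = 0$. The main obstacle is the bookkeeping in the reduction step: arranging simultaneously that $\xi$ is written as a sum of symbols \emph{each} of whose factors has support in a fixed SNC divisor, and checking that the local parameters at each closed point really are the $\pi,\delta$ to which Lemma~\ref{la:coho cl van} applies — in particular handling closed points lying on the closed fiber and on a horizontal component at once, and closed points where only the closed fiber passes.
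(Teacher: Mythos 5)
The first half of your proposal coincides with the paper's argument: reduce to the twist $r=n$ via restriction--corestriction along $\sgf(\zeta_m)/\sgf$, write $\xi$ as a sum of symbols by Bloch--Kato, choose a regular proper model $\sgm$ on which the supports of the $a_{ij}$ form a normal crossings configuration, and apply Lemma~\ref{la:coho cl van} at each closed point $P$ of the closed fiber to get vanishing of $\xi$ in $H^n(\sgf_P,\mu_m^{\otimes n})$, where $\sgf_P=\frf(\wh{\ms O}_{\sgm,P})$. (A small slip: the containment in your parenthetical goes the other way, $\sgf_v\subseteq \frf(\wh{\ms O}_{\sgm,x})_\pi$, which is what makes the functoriality argument work.) The genuine gap is in your endgame. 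Your final claim --- that a class in $H^n_{\mathrm{et}}(\sgm,\mu_m^{\otimes n})$ vanishing over $\wh{\ms O}_{\sgm,x}$ must vanish, because $H^n_{\mathrm{et}}(\sgm_0)$ ``injects'' into the cohomology of the complete local ring at a closed point --- is false. By rigidity, $H^n_{\mathrm{et}}(\Spec\wh{\ms O}_{\sgm,x},\mu_m^{\otimes n})\cong H^n(\kappa(x),\mu_m^{\otimes n})$, and the restriction from the closed fiber to a closed point (or even to all closed points) is very far from injective: e.g.\ for $\dvf=\mathbb C((t))$ and $\sgm=\mathbb P^1_{\mathbb C[[t]]}$ one has $H^2_{\mathrm{et}}(\sgm_0,\mu_m)\cong\Z/m\neq 0$ while $H^2(\kappa(x))=0$ for every closed point $x$. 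The same issue undermines your intermediate unramifiedness step: vanishing of $\partial_y(\xi)$ after completing at a point of $\overline{\{y\}}$ does not force $\partial_y(\xi)=0$ in $H^{n-1}(\kappa(y))$, since $H^{n-1}(\kappa(y))\to H^{n-1}(\kappa(y)_x)$ is not injective.

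What is actually needed at this stage is not purity plus proper base change but a patching-based local--global principle: once $\xi$ is known to die in $H^n(\sgf_x,\mu_m^{\otimes n})$ for \emph{every} scheme-theoretic point $x$ of the closed fiber --- the closed points by Lemma~\ref{la:coho cl van}, and the codimension-one points directly from the hypothesis $\xi\in\shasgf n$, since those valuations lie in $\valsgm\subseteq\valsgf$ --- one invokes \cite[Theorem~3.2.3(i)]{HHK:LGGC} (this is where the hypothesis that $[\sgf(\zeta_m):\sgf]$ is prime to $m$ enters) to conclude $\xi=0$. Note that your argument never uses the local vanishing at the generic points of the closed fiber, which is essential: vanishing at the complete local rings of closed points alone cannot yield global vanishing. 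So the proposal is not a complete proof; replacing your purity/proper-base-change endgame by the citation of the Harbater--Hartmann--Krashen theorem recovers the paper's proof.
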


\begin{proof}
First consider the case where $r=n$, and let $\xi \in H^n(\sgf, \mu_m^{\otimes n})$.
It follows from the Bloch-Kato conjecture \cite{Voe11} that 
\[\xi =  \sum_i (a_{i1}) \cdot \ldots \cdot (a_{in}),\] where $(a_{ij}) \in H^1(\sgf,\mu_m)$ for some $a_{ij} \in \sgf^\times$. 
Let $\sgm$ be a regular proper model of $\sgf$ over the valuation ring $R$ of $\dvf$ such that $\cup {\rm Supp}_{\sgm} (a_{ij})$ is a union of regular curves with normal crossings.  Such a regular model exists by \cite[page 193]{Lip75} and \cite{Lip78}.

Suppose that the image of $\xi$ in each $H^n(\sgf_v,\mu_m^{\otimes n})$ is zero. Let $P$ be a closed point of $\sgm$ (thus lying on the closed fiber), and let $A=\hat{\ms O}_{\sgm,P}$ be the completion of the local ring of $\sgm$ at $P$. 
By the normal crossings hypothesis, we may apply
Lemma~\ref{la:coho cl van} to $A$ for suitable elements $\pi,\delta$; and so the image of $\xi$ in $H^n(\sgf_P, \mu_m^{\otimes n})$ is zero. 
Since $[\sgf(\zeta_m) : \sgf]$ is prime to $m$, it follows from
\cite[Theorem~3.2.3(i)]{HHK:LGGC} that $\xi = 0$, proving this case.

For the case of a general value of $r$, let $\xi \in H^n(\sgf,\mu_m^{\otimes r})$ be in the kernel of $\iota$. Since $H^n(\sgf(\zeta_m),\mu_m^{\otimes r}) \simeq H^n(\sgf(\zeta_m),\mu_m) \simeq H^n(\sgf(\zeta_m),\mu_m^{\otimes n})$, the image of $\xi$ in $H^n(\sgf(\zeta_m),\mu_m^{\otimes r})$ is zero, by the above special case.  Since $[\sgf(\zeta_m) : \sgf]$ is prime to $m$, it follows
that $\xi = 0$.
\end{proof}

Of course the hypothesis that $[\sgf(\zeta_m) : \sgf]$ is prime to $m$ is automatic if $m$ is prime. The argument below shows that we can always make the latter assumption in order to establish that $\shasgf n(\sgf,\mu_m^{\otimes n-1} )$ is trivial. We thank Jean-Louis Colliot-Th\'el\`ene for pointing that out.

\begin{cor}\label{mtol}
Let $\dvf$ be a complete discretely valued field with residue field $\kappa$, and let $\sgf$ be the function field of a regular curve over $\dvf$.
Let $m$ be a positive integer that is not divisible by the characteristic of $\kappa$.
Then $\shasgf n(\sgf,\mu_m^{\otimes n-1}), n \ge 2$, is trivial.
\end{cor}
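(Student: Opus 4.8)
The plan is to reduce to the case $m=\ell$ prime, where $[\sgf(\zeta_\ell):\sgf]$ divides $\ell-1$ and so is automatically prime to $\ell$, and the vanishing is exactly Theorem~\ref{sha mixed thm}. First, since $\mu_{\ell^a}\otimes_{\Z}\mu_{\ell'^b}=0$ for distinct primes $\ell\neq\ell'$, the primary decomposition of $\mu_m$ gives $\mu_m^{\otimes n-1}=\bigoplus_{\ell\mid m}\mu_{\ell^{v_\ell(m)}}^{\otimes n-1}$ as Galois modules, hence a decomposition $H^n(\sgf,\mu_m^{\otimes n-1})=\bigoplus_\ell H^n(\sgf,\mu_{\ell^{v_\ell(m)}}^{\otimes n-1})$ that is compatible with restriction to each completion $\sgf_v$; so $\shasgf n(\sgf,\mu_m^{\otimes n-1})=\bigoplus_\ell\shasgf n(\sgf,\mu_{\ell^{v_\ell(m)}}^{\otimes n-1})$, and one may assume $m=\ell^k$ is a prime power.

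Next I would cut down the exponent by a transfer. The field $\sgf':=\sgf(\zeta_{\ell^k})$ is the function field of a regular curve over the complete discretely valued field $\dvf(\zeta_{\ell^k})$, whose residue characteristic is still prime to $\ell$, and $[\sgf'(\zeta_{\ell^k}):\sgf']=1$, so Theorem~\ref{sha mixed thm} gives $\shasgf n(\sgf',\mu_{\ell^k}^{\otimes n-1})=0$. A divisorial valuation of $\sgf'$ restricts to a divisorial valuation of $\sgf$ (with a larger completion), so restriction carries $\shasgf n(\sgf,\mu_{\ell^k}^{\otimes n-1})$ into $\shasgf n(\sgf',\mu_{\ell^k}^{\otimes n-1})=0$; applying corestriction then shows $[\sgf':\sgf]\,\xi=0$ for every $\xi\in\shasgf n(\sgf,\mu_{\ell^k}^{\otimes n-1})$. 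If $\ell\nmid[\sgf':\sgf]$ this already gives $\xi=0$ (which covers $k=1$), so we may assume $\xi$ is killed by $\ell^a$ with $1\le a\le k-1$; likewise, replacing $\sgf$ by $\sgf(\zeta_\ell)$ (of degree dividing $\ell-1$, hence prime to $\ell$, so that restriction on these $\shasgf{}$ groups is injective) we may assume $\zeta_\ell\in\sgf$.

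For the remaining case I would argue by induction on $k$, but note that a purely cohomological dévissage along $0\to\mu_{\ell^a}^{\otimes n-1}\to\mu_{\ell^k}^{\otimes n-1}\to\mu_{\ell^{k-a}}^{\otimes n-1}\to0$ does not close up: using the inductive vanishing of $\shasgf n(\sgf,\mu_{\ell^{k-a}}^{\otimes n-1})$ one gets that $\xi$ comes from a class $\psi\in H^n(\sgf,\mu_{\ell^a}^{\otimes n-1})$, but the local classes $\psi_v$ need only lie in the image of a Bockstein $H^{n-1}(\sgf_v,\mu_{\ell^{k-a}}^{\otimes n-1})\to H^n(\sgf_v,\mu_{\ell^a}^{\otimes n-1})$, not vanish, so one cannot conclude $\psi\in\shasgf n(\sgf,\mu_{\ell^a}^{\otimes n-1})=0$ directly — $\shasgf\bullet(\sgf,-)$ is not right exact in the coefficient module. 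Instead I would return to $\xi$ itself and rerun the proof of Theorem~\ref{sha mixed thm}, exploiting that for the coefficients $\mu_{\ell^k}^{\otimes n-1}$ its concluding step is unconditional. Namely: write $\xi$ as a finite sum of generalized symbols $(a_{i1})\cdots(a_{i,n-1})\cup\chi_i$ with $a_{ij}\in\sgf^\times$ and $\chi_i\in H^1(\sgf,\Z/m)$; pick a regular proper model $\sgm$ of $\sgf$ over the valuation ring of $\dvf$ on which the union of the $\Supp(a_{ij})$ and the ramification loci of the $\chi_i$ is a union of regular curves with normal crossings (by \cite{Lip75},\cite{Lip78}); at a closed point $P$ of the closed fiber of $\sgm$, with parameters $\pi,\delta$ cutting out the at most two components through $P$, write $\xi=\omega_0+\omega_1\cup(\pi)+\omega_2\cup(\delta)+\omega_3\cup(\pi)\cup(\delta)$ with the $\omega_j$ unramified at $P$ (i.e.\ in the étale cohomology of $\wh{\ms O}_{\sgm,P}$), and run the residue computation of Lemma~\ref{la:coho cl van}, verbatim but with the coefficients twisted, using the triviality of $\xi$ over the completions at the branch valuations through $P$ (which lie in $\valsgf$) to force each $\omega_j=0$; this makes $\xi$ trivial over $\sgf_P=\frf(\wh{\ms O}_{\sgm,P})$ for every $P$. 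Finally $\Sha^n_\Omega(\sgf,\mu_{\ell^k}^{\otimes n-1})=0$ for $\Omega$ the set of scheme-theoretic points of the closed fiber of $\sgm$ and $n\ge2$ by \cite[Theorem~3.2.3(i)]{HHK:LGGC} — and, unlike in the $r=n$ case of Theorem~\ref{sha mixed thm}, this imposes no condition relating $\zeta_{\ell^k}$ to $\sgf$ — so $\xi=0$.

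The main obstacle is this last step, and within it the generalized-symbol presentation of $H^n(\sgf,\mu_m^{\otimes n-1})$ together with the accompanying twisted form of Lemma~\ref{la:coho cl van}: these are the substitutes for the ``untwisting'' use of the hypothesis $[\sgf(\zeta_m):\sgf]$ prime to $m$ in Theorem~\ref{sha mixed thm} (whose Bloch--Kato input \cite{Voe11} only produces honest symbols for the Tate-twisted coefficients $\mu_m^{\otimes n}$). This is exactly where the argument must go beyond formal manipulation of the $\Sha$ groups, and it is what the reduction to $m$ prime is designed to make tractable.
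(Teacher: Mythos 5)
Your primary decomposition into prime powers and your treatment of the prime case (Theorem~\ref{sha mixed thm} applied to $\mu_\ell^{\otimes n-1}$, unconditional since $[\sgf(\zeta_\ell):\sgf]$ divides $\ell-1$) coincide with the paper, and you are right that a Bockstein d\'evissage on finite coefficients does not close up. But for $m=\ell^k$ with $k\ge 2$ your argument has a genuine gap at exactly the point you yourself flag as ``the main obstacle'': the claim that every class of $H^n(\sgf,\mu_{\ell^k}^{\otimes n-1})$ can be written as a sum of generalized symbols $(a_{i1})\cdots(a_{i,n-1})\cup\chi_i$, together with a ``twisted'' version of Lemma~\ref{la:coho cl van}, is asserted but not proved, and neither is a standard citable fact. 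Bloch--Kato \cite{Voe11} produces symbol generators only for the untwisted coefficients $\mu_m^{\otimes n}$; for the twist $\mu_{\ell^k}^{\otimes n-1}$ when $\zeta_{\ell^k}\notin\sgf$ (your reduction only puts $\zeta_\ell$ into $\sgf$) such a decomposability statement is already delicate for $n=2$ (generation of $\Br(\sgf)[\ell^k]$ by cyclic classes without roots of unity). Moreover, even granting the presentation, the proof of Lemma~\ref{la:coho cl van} does not run ``verbatim'': the factors $\chi_i$ are characters, not Kummer classes, so they cannot be written as $u\pi^{\epsilon}\delta^{\epsilon'}$, and when some $\chi_i$ is ramified along $\pi$ or $\delta$ the decomposition $\xi=\omega_0+\omega_1\cup(\pi)+\omega_2\cup(\delta)+\omega_3\cup(\pi)\cup(\delta)$ with all $\omega_j$ unramified at $P$ is itself unjustified in this twisted, root-of-unity-free setting. (Your restriction--corestriction preliminaries, which kill $\xi$ by the $\ell$-part of $[\sgf(\zeta_{\ell^k}):\sgf]$, are correct but are never used afterwards.)

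For comparison, the paper closes the prime-power case without any symbol presentation of the twisted group: after reducing to $m=\ell^r$, it uses Bloch--Kato to show that $H^{n-1}(\sgf,\Q_\ell/\Z_\ell(n-1))\simeq \varinjlim_i K^M_{n-1}(\sgf)/\ell^i$ (transition maps multiplication by $\ell$) is $\ell$-divisible, so that $H^n(\sgf,\mu_{\ell^r}^{\otimes n-1})\to H^n(\sgf,\Q_\ell/\Z_\ell(n-1))$ is injective and identifies the source with the $\ell^r$-torsion; the same holds over each $\sgf_v$. Then the prime case shows that the kernel of $H^n(\sgf,\Q_\ell/\Z_\ell(n-1))\to\prod_{v\in\valsgf}H^n(\sgf_v,\Q_\ell/\Z_\ell(n-1))$ has no $\ell$-torsion, hence (being $\ell$-primary) no $\ell^r$-torsion, which gives the injectivity for $\mu_{\ell^r}^{\otimes n-1}$. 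In other words, the passage between torsion levels that you correctly identified as the sticking point is handled not by reworking the symbol computation with twisted coefficients, but by embedding everything into divisible coefficients, where Bloch--Kato supplies exactly the divisibility needed. You would need either to adopt that route or to actually establish your generalized-symbol presentation and the twisted analogue of Lemma~\ref{la:coho cl van}; as written, the proposal is incomplete at its decisive step.
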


\begin{proof}
By Theorem \ref{sha mixed thm} above, the assumption holds if $m$ is prime.
In the general case, by decomposing $m$ into a product of prime powers, we may assume  that $m=\ell^r$ is a power of a prime number.
Using the Bloch-Kato conjecture \cite{Voe11}, one has that the map  
$$H^{n}(\sgf ,\mu_{\ell^r}^{\otimes n-1}))\stackrel{\iota_r}{\to} H^n(\sgf, \mathbb Q_{\ell}/\mathbb Z_{\ell}(n-1))$$ 
is injective.
(See the argument that was given in \cite[18.4(c)]{MS} in the case $n=3$.  Namely, 
since $\ker\iota_r \simeq H^{n-1}(\sgf, \mathbb Q_{\ell}/\mathbb Z_{\ell}(n-1))/\ell^r$
by the Kummer exact sequence, it suffices to show that $H^{n-1}(\sgf, \mathbb Q_{\ell}/\mathbb Z_{\ell}(n-1))$ is $\ell$-divisible.  By Bloch-Kato this is isomorphic to $\varinjlim_i K_{n-1}^M \sgf/{\ell^i}$, where the transition maps are multiplication by $\ell$; and this is $\ell$-divisible.)
Hence the group
 $H^{n}(\sgf,\mu_{\ell^r} ^{\otimes n-1})$ is the $\ell^r$-torsion subgroup of the group  $H^n(\sgf, \mathbb Q_{\ell}/\mathbb Z_{\ell}(n-1))$.  
By Theorem \ref{sha mixed thm} above (applied to the group $\shasgf n(\sgf,\mu_{\ell}^{\otimes n-1})$), the map
 $$H^{n}(\sgf,\mu_{\ell}^{\otimes n-1})) \to \prod_{v\in\valsgf}\ H^{n}(\sgf_v,\mu_{\ell}^{\otimes n-1}))$$ is injective; hence the kernel of the map 
$$H^{n}(\sgf,\mathbb Q_{\ell}/\mathbb Z_{\ell}(n-1)) \to \prod_{v\in\valsgf}\ H^{n}(\sgf_v, \mathbb Q_{\ell}/\mathbb Z_{\ell}(n-1))$$ 
has no $\ell$-torsion. Hence it has no $\ell^r$-torsion. This gives that the map
$$H^{n}(\sgf,\mu_{\ell^r}^{\otimes n-1})) \to \prod_{v\in\valsgf}\ H^{n}(\sgf_v,\mu_{\ell^r}^{\otimes n-1}))$$
is injective as well, as desired.
\end{proof}

\begin{lemma} \label{flattening}
Consider a morphism of integral Noetherian schemes $f: V \to W$, and let $v$ be a discrete valuation on the function field $\kappa(V)$ corresponding to a prime divisor $D \subset V$. If the restriction $w$ of $v$ to the function field $\kappa(W)$ of $W$ is nontrivial, then there is a commutative diagram
\[\xymatrix{
  \til V \ar[r] \ar[d] & V \ar[d] \\
  \til W \ar[r] & W
}\]
whose horizontal maps are birational morphisms, together with a prime divisor $\til D$ on $\til V$ that induces the valuation $v$, and whose image in $\til W$ is a prime divisor $\til E$ giving rise to $w$.

That is, the restriction of a divisorial valuation, if nontrivial, is also divisorial.
\end{lemma}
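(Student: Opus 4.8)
\emph{Proof sketch (proposal).} The plan is, after recording that $f$ must be dominant and identifying the centers involved, to reduce to the case in which the source is flat over the target by Raynaud--Gruson flattening (platification par \'eclatement), and then to use the dimension formula for flat local homomorphisms to force the center of $w$ into codimension one, thereby producing $\til E$. To begin: since $w = v|_{\kappa(W)}$ is defined, $f$ is dominant, so $\kappa(W)\subseteq\kappa(V)$; let $\ms O_v$ be the valuation ring of $v$ (a DVR, dominating the local ring of $V$ at the generic point $\eta_D$ of $D$) and put $P = f(\eta_D)\in W$. Then $\ms O_{W,P}\to\ms O_v$ is a local homomorphism, the subring $\ms O_w := \ms O_v\cap\kappa(W)$ is the valuation ring of $w$ --- discrete, and nontrivial by hypothesis --- and $\ms O_w$ dominates $\ms O_{W,P}$, so $P$ is the center of $w$ on $W$. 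We may further assume $f$ is of finite type, as this is the case needed in the applications.

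Now the main step. By generic flatness, $f$ is flat over a dense open $U\subseteq W$; by Raynaud--Gruson there is a blow-up $\pi\colon\til W\to W$ with center a nowhere-dense closed subscheme of $W\setminus U$ such that the strict transform $\til V$ of $V$ in $V\times_W\til W$ is flat over $\til W$. Here $\pi$ is projective birational; $\til V$ (the closed subscheme of $V\times_W\til W$ that is the closure of the dense open $f^{-1}(U)$) is integral; the first projection $\til V\to V$ is projective birational; and the square built from $\til V\to V$, $\til V\to\til W$, $f$ and $\pi$ commutes, as $\til V\subseteq V\times_W\til W$.

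It remains to transfer the divisor and read off $\til E$. By the valuative criterion of properness applied to $\til V\to V$ and the DVR $\ms O_v$, the morphism $\Spec\ms O_v\to V$ carrying the closed point to $\eta_D$ lifts to $\Spec\ms O_v\to\til V$; let $\eta_{\til D}$ be the image of the closed point and $\til D=\ov{\{\eta_{\til D}\}}$. Then $\ms O_{\til V,\eta_{\til D}}$ is dominated by $\ms O_v$, so it is one-dimensional --- at most since $\ms O_v$ is a DVR, at least since $\ms O_v\neq\kappa(V)$ --- whence $\til D$ is a prime divisor inducing $v$. Let $P'$ be the image of $\eta_{\til D}$ in $\til W$ and $\til E=\ov{\{P'\}}$. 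Intersecting the domination $\ms O_{\til V,\eta_{\til D}}\subseteq\ms O_v$ with $\kappa(W)$ shows $\ms O_{\til W,P'}$ is dominated by $\ms O_w$, so $\til E$ gives rise to $w$. Finally, flatness of $\til V\to\til W$ yields $1=\dim\ms O_{\til V,\eta_{\til D}}=\dim\ms O_{\til W,P'}+d$, with $d\geq 0$ the dimension of the fibre of $\til V\to\til W$ over $P'$ at $\eta_{\til D}$; thus $\dim\ms O_{\til W,P'}\leq 1$, and it is $\geq 1$ because $w$ is nontrivial, so $P'$ has codimension one and $\til E$ is a prime divisor. Since $\eta_{\til D}$ maps to $P'=\eta_{\til E}$, the image of $\til D$ in $\til W$ is $\til E$ (precisely, $\til E$ is the closure of that image), completing the required diagram.

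The step I expect to be the main obstacle is the flattening and the attendant bookkeeping: both the reduction to a finite-type morphism, so that Raynaud--Gruson is available, and the interaction of flatness with Krull dimension that pins the center of $w$ to codimension one --- it is precisely this that makes ``divisorial'' descend along $f$; the rest is formal. A secondary point is that $\ms O_{\til V,\eta_{\til D}}$ and $\ms O_{\til W,P'}$ are only dominated by, not equal to, $\ms O_v$ and $\ms O_w$, which is why the conclusion is phrased via the valuations ``induced by'' $\til D$ and ``giving rise to'' $\til E$; if genuine equalities of local rings are wanted, one normalizes $\til V$ and $\til W$ along these divisors --- harmless for the excellent schemes occurring in the applications.
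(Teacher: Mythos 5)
Your proposal follows essentially the same route as the paper's proof: flattening by blowing up (Raynaud--Gruson, as in the Stacks project references the paper cites), passage to the strict transform $\til V$, the valuative criterion of properness to move the center of $v$ up to $\til V$, and then a flatness--dimension count to force the center of $w$ on $\til W$ into codimension one. Your count is phrased with local rings ($\dim \ms O_{\til V,\eta_{\til D}} = \dim \ms O_{\til W,P'} + \dim$ of the local fiber ring), whereas the paper argues globally with $\dim(\til\Delta\times_{\til W}\til V)$; the two are equivalent, and your local version is if anything cleaner. The finite-type assumption you add is harmless: it is implicitly needed for the paper's citations too, and holds in the application.

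One justification in your write-up is incorrect, although the claim it supports is true and the repair is easy. You argue that $\ms O_{\til V,\eta_{\til D}}$ has dimension at most one ``since $\ms O_v$ is a DVR,'' i.e., that a Noetherian local domain dominated by a DVR with the same fraction field must be at most one-dimensional. That principle is false: $k[x,y]_{(x,y)}$ is dominated by the DVR of the exceptional valuation of the blowup of $\mbb A^2_k$ at the origin; equivalently, the center of a divisorial valuation on a model can well have codimension $\ge 2$ --- which is exactly why the lemma has content at all. What saves the step is that $\eta_{\til D}$ lies over the codimension-one point $\eta_D$ under the birational, finite-type morphism $\til V\to V$, so $\ms O_{V,\eta_D}\subseteq \ms O_{\til V,\eta_{\til D}}\subseteq \kappa(V)$, and Krull--Akizuki (or the dimension inequality for finite-type extensions of Noetherian domains, which gives $\dim \ms O_{\til V,\eta_{\til D}}\le \dim\ms O_{V,\eta_D}=1$ in the birational case) yields the bound; your argument that the dimension is $\ge 1$ is fine as written. (The paper glosses over this same point, simply asserting that the valuative criterion produces a divisor $\til D$.) With that substitution your proof is complete and matches the paper's.
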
 

\begin{proof}
Let $d$ be the dimension of the generic fiber of $f$.
By \cite[\href{https://stacks.math.columbia.edu/tag/087E}{Lemma 087E}]{stacks-project}, we may find a blowup along some ideal sheaf $\til W \to W$ such that if we take the strict transform $\til V$ of $V$ with respect to the blowup, the morphism $\til V \to \til W$ is flat.
Note that we also have $\til V \to V$ is a blowup along the corresponding inverse ideal sheaf by \cite[\href{https://stacks.math.columbia.edu/tag/080E}{Lemma 080E}]{stacks-project}, and therefore is a birational morphism of integral Noetherian schemes. Since $\til V \to V$ is proper, by the valuative criterion, we may find a divisor $\til D$ of $\til V$ giving rise to $v$, whose image in $V$ is $D$. Now, consider the image $\til \Delta$ of $\til D$ in $\til W$.
Since the valuation $v$ is nontrivial on $\kappa(W)$, it follows that the morphism $D \to W$ is not dominant; hence neither is $\til D \to \til W$, and so $\til \Delta \neq \til W$. Since $\til V \to \til W$ is flat of relative dimension $d$, we have $\dim (\til \Delta \times_{\til W} \til V) = \dim \til \Delta + d \leq \dim W + d - 1$.
But since $\til D \subseteq \til \Delta \times_{\til W} \til V$,
and $\dim \til D = \dim V - 1 = \dim W + d - 1$, it follows that 
$\dim(\til \Delta \times_{\til W} \til V) = \dim W + d - 1$ and hence
$\dim \til \Delta = \dim W - 1$.
 In particular $\til \Delta$ is a divisor and since the center of $w$ must contain $\til \Delta$, it must be equal to $\til \Delta$ as desired.
\end{proof}

\begin{prop}\label{comparisonHnr}
Let $\fy$ be  the function field of a regular projective curve over a semi-global field $\sgf$.  Then
$$\shasgf n(\fy)\subseteq \shafy n(\fy) \subseteq H^n_{\nr}(\fy), \text{ for all }n\geq 3.$$
\end{prop}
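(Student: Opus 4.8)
The plan is to establish the two inclusions separately. The inclusion $\shafy n(\fy)\subseteq H^n_{\nr}(\fy)$ holds for every $n\ge 1$ and is precisely the remark preceding the proposition: for $w\in\valfy$ the residue $\partial_w$ factors through the completion $\fy_w$. So the real content is $\shasgf n(\fy)\subseteq\shafy n(\fy)$. Let $\alpha\in H^n(\fy)$ satisfy $\alpha|_{\fy_v}=0$ for all $v\in\valsgf$; I must show $\alpha|_{\fy_w}=0$ for all $w\in\valfy$. Fix $w$ and let $v=w|_\sgf$, which is either trivial or a discrete valuation on $\sgf$ (its value group is a subgroup of $\Z$). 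I treat the two cases separately.

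Suppose $v$ is nontrivial. Choose a normal model $\mathcal C$ of $\fy$ over $\dvr$ on which $w$ is realized by a prime divisor; after blowing up $\mathcal C$ so that the natural rational map to a fixed model $\sgm$ of $\sgf$ (induced by $\sgf\subseteq\fy$) becomes a morphism, Lemma~\ref{flattening} shows that $v\in\valsgf$. The closure of $\sgf$ inside the complete field $\fy_w$ is its completion $\sgf_v$, so the inclusion $\fy\hookrightarrow\fy_w$ factors through $\fy\otimes_\sgf\sgf_v=\fy_v$; hence the restriction $H^n(\fy)\to H^n(\fy_w)$ factors through $H^n(\fy_v)$, on which $\alpha$ vanishes, and therefore $\alpha|_{\fy_w}=0$. (This case uses only $n\ge 1$.)

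Suppose $v$ is trivial. Then $w$ is a discrete valuation of $\fy=\sgf(\gy)$ trivial on $\sgf$, hence the valuation of a closed point $P$ of the regular projective curve $\gy/\sgf$; its residue field $L:=\kappa(P)$ is a finite extension of $\sgf$, so $L$ is again the function field of a regular curve over a complete discretely valued field, and Corollary~\ref{mtol} applies to $L$. Moreover $\fy_w\cong L((t))$ is an equicharacteristic complete discretely valued field (since $\cha L\nmid m$), giving the standard decomposition $H^n(\fy_w)\cong H^n(L)\oplus H^{n-1}(L)$; write $\alpha|_{\fy_w}=\alpha_0+\alpha_1\cdot(t)$ with $\alpha_0\in H^n(L)$ and $\alpha_1=\partial_w(\alpha)\in H^{n-1}(L)$. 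The crux is to show that $\alpha_0$ lies in the kernel of $H^n(L)\to\prod_u H^n(L_u)$ and $\alpha_1$ in the kernel of $H^{n-1}(L)\to\prod_u H^{n-1}(L_u)$, the products taken over the divisorial valuations $u$ of $L$. Granting this, Corollary~\ref{mtol} forces $\alpha_0=0$ (because $n\ge 2$) and $\alpha_1=0$ (because $n-1\ge 2$ — the only place the hypothesis $n\ge 3$ is used), so $\alpha|_{\fy_w}=0$, completing the proof.

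To prove the crux — which is the main obstacle — let $u$ be a divisorial valuation of $L$. Since $L/\sgf$ is finite, $v':=u|_\sgf$ is nontrivial, so $v'\in\valsgf$ by Lemma~\ref{flattening}. The base-changed curve $\gy_{\sgf_{v'}}$ has a closed point $\tilde P$ over $P$ with residue field $L_u$, corresponding to the factor $L_u$ of $L\otimes_\sgf\sgf_{v'}$ (I assume $L/\sgf$ separable, so that this factor is a field and $\gy_{\sgf_{v'}}$ is regular at $\tilde P$; the inseparable case is handled via the ramification bound of Lemma~\ref{inseparably ramified}). Completing $\gy_{\sgf_{v'}}$ at $\tilde P$ produces an equicharacteristic complete discretely valued field $L_u((s))$, which receives a map from $\fy_w=L((t))$ compatible with the above decomposition and its analogue for $L_u((s))$. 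Since $\gy$ is geometrically integral, $\alpha$ dies in $H^n$ of the fraction field $\sgf_{v'}(\gy)$ of $\fy\otimes_\sgf\sgf_{v'}=\fy_{v'}$, hence in $H^n(L_u((s)))$. Taking the residue of $\alpha|_{L_u((s))}$ along $s$, and using that the ramification index of $L_u[[s]]$ over $L[[t]]$ is prime to $m$ (by Lemma~\ref{inseparably ramified}, being $1$ or a power of $\cha L$), gives $\alpha_1|_{L_u}=0$, whereupon inspecting the unramified part gives $\alpha_0|_{L_u}=0$. As $u$ was arbitrary, the crux follows. The delicate points throughout are the construction of $\tilde P$ with the prescribed residue field and the compatibility of the residue/constant splitting of Galois cohomology of a complete discretely valued field with the pullback $L((t))\to L_u((s))$.
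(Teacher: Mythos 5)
Your proposal is correct and follows essentially the same route as the paper's proof: the same split into the vertical case (where $w$ restricts nontrivially to $\sgf$, handled via Lemma~\ref{flattening} and factoring $H^n(\fy)\to H^n(\fy_w)$ through $H^n(\fy_v)$) and the horizontal case, which is reduced to two applications of Corollary~\ref{mtol} over the residue field $L=\kappa(P)$ — once for the residue class in $H^{n-1}(L)$ (the only place $n\ge 3$ is used) and once for the unramified part in $H^n(L)$ — with Lemma~\ref{inseparably ramified} guaranteeing ramification indices prime to $m$. The only difference is organizational: the paper first kills the residue globally and then identifies the remaining class with an element of $H^n(\kappa(y))$ via $H^n(\widehat{\mathcal O}_{\my,y})$, whereas you verify local vanishing of both components of the splitting $H^n(L((t)))\cong H^n(L)\oplus H^{n-1}(L)$ at every divisorial place of $L$ before invoking the corollary, a cosmetic reordering of the same argument.
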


\begin{proof}
The second containment was observed following Notation~\ref{notn}.  For the first containment,
let $\alpha\in \shasgf n(\fy)$.  Consider $w\in  \valfy$, corresponding to a codimension one point $y$ on some normal model $\my$ of $\fy$.  We wish to show that 
the image $\alpha_y$ of $\alpha$ in $H^n(\fy_w)$ vanishes. 
Let $D_y\subset \my$ be the closure of $y$ in $\my$. Let $\sgm$ be a regular model of $\sgf$.
 Note that the rational map $\my\dashrightarrow  \sgm$ extends to a regular map $\ms U\to \sgm$ on some open set $\ms U\subseteq \my$ containing $y$.
 
Assume first that $D_y$ is a vertical divisor; i.e., $y$ is in a fiber of $\pi$ over $x\in \sgm$. By Lemma~\ref{flattening}, after changing the model $\sgm$ we may assume that $x$ is a codimension $1$ point of $\sgm$, and $y$ is the generic point of one of the components of the closed fiber of  ${\ms U}\times_{\sgm}  {\ms O}_{\sgm, x}$. Let $v=v_x\in \valsgf$ be the corresponding divisorial valuation.
Then $y$ is also the generic point of one of the components of the closed fiber of   ${\ms U}\times_{\sgm} \wh {\ms O}_{\sgm, x}$, so that the
map $H^n(\fy)\to H^{n}(\fy_w)$ factors through $H^n(\fy_v)$.
 Since $\alpha$ maps to zero in $H^n(\fy_v)$, we obtain that the image of $\alpha$ in $H^{n}(\fy_w)$ is zero.

Assume next that $D_y$ is a horizontal divisor; i.e., the restriction to $\sgf$ of the discrete valuation corresponding to $y$ is trivial.  Thus we can
view $y$ as a codimension one point of the generic fiber $\gy$ of $\my$ over $\sgf$, 
whose residue field $L=\kappa (D_y)$ is a finite extension of $\sgf$. Let $v_z$ be a place on $L$ corresponding to a codimension $1$ point $z$ on some regular proper model $\tilde D_y\to \sgm$ of $L$, and let $x$ be the image of $z$ in $\sgm$. Again by Lemma~\ref{flattening}, after changing the model $\sgm$ along with $\tilde D_y$, we may assume  that $x$ is a codimension $1$ point of $\sgm$. Let $v=v_x$ be the corresponding valuation on $\sgf$.
Let $L_{z}$ be the completion of $L$ at $v_z$. Then the natural inclusion $L\subset L_{z}$ factors through $L\otimes_\sgf \sgf_v$ (which is the direct sum of the residue fields at the closed points of $\gy_{\sgf_v}$ over $y$).

We have the following commutative diagram (similar to \cite[p.167]{Kato}):
$$\xymatrix{H^n(\fy)\ar[r]\ar[d]^j& \prod\limits_{y \in  \gy^{(1)}}H^{n-1}(\kappa(y))\ar[d]^{j'}\\
\prod\limits_{v\in \valsgf} H^n(\fy_{v})\ar[r]& \prod\limits_v \prod \limits_{y'\in  \gy_{\sgf_v}^{(1)}} H^{n-1}(\kappa(y')).
}
$$

By assumption, $\alpha$ maps to zero over  $\fy_v$.
Hence the residue $\partial^n_{v_y}(\alpha)$ maps to zero in $\prod \limits_{y'\in  \gy_{\sgf_v}^{(1)}} H^{n-1}(\kappa(y')),$   hence is zero over any completion $\kappa(y)_z$ of $\kappa(y)$ as in the discussion above.
 Using that $n-1\geq 2$, by Corollary~\ref{mtol} we deduce that $\partial^n_{v_y}(\alpha)=0$. Hence by \cite[Part 1, II.7.9]{GMS:CohInv}
 and \cite[XII, Cor.~5.5(iii)]{SGA4-3},
the image $\alpha_y$ of  $\alpha$ in $H^n(\fy_w)$ satisfies
\[\alpha_y\in \im[H^n(\widehat {\mathcal O}_{\my, y})\hookrightarrow H^n(\fy_w)].\]
Let $\beta$ be the image of $\alpha_y$ under the natural isomorphism $H^n(\widehat {\mathcal O}_{\my, y})\stackrel{\sim}{\to} H^n(\kappa(y))$ given in
\cite[XII Cor 5.5(iii)]{SGA4-3}.
Similarly as above, since $\alpha$ maps to zero in $\prod\limits_{v\in \valsgf} H^n(\fy_{v})$ we deduce that $\beta$ satisfies
$$\beta\in \ker [H^n(\kappa(y))\to \prod_v \prod \limits_{y'\in  \gy_{\sgf_v}^{(1)}} H^{n}(\kappa(D_{y'}))].$$  
So $\beta=0$ by Corollary~\ref{mtol}
 again, and the image $\alpha_y$ of $\alpha$ in $H^{n}(\fy_w)$ is indeed zero.
\end{proof}

On the other hand, Proposition~\ref{comparisonHnr} does not hold in general for $n=2$; see Remark~\ref{Sha noninclus ex} below.

\begin{lemma}\label{comparisonHnrB}
Let $\fy$ be  the function field of a regular projective curve over a semi-global field $\sgf$.  Assume that  $[\sgf(\zeta_m) : \sgf]$ is prime to $m$. Then
$$\shasgf n(\fy)\supseteq \shafy n(\fy), \text{ for all }n\geq 2.$$
\end{lemma}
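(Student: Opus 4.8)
The goal is to show that a class $\alpha \in \shafy n(\fy)$ — i.e.\ one that vanishes over $\fy_v$ for every divisorial valuation $v$ of $\fy$ — already vanishes over $\fy_w$ for every $w \in \valsgf$, that is, for every completion coming from a divisorial valuation on the base semi-global field $\sgf$. The basic mechanism is that a divisorial valuation $w$ on $\sgf$, when pulled back to $\fy = \sgf(\gy)$, is ``refined'' by divisorial valuations on $\fy$: concretely, if $\sgm$ is a regular model of $\sgf$ and $x \in \sgm^{(1)}$ is the codimension-one point giving $w$, then one chooses a normal model $\my$ of $\fy$ dominating $\sgm$ (after possibly blowing up, using Lemma~\ref{flattening} to keep things divisorial), and the closed fiber of $\my \times_\sgm \wh{\ms O}_{\sgm,x}$ has generic points $y_1,\dots,y_r$ that are codimension-one points of $\my$, hence give valuations in $\valmy \subseteq \valfy$. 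The map $H^n(\fy) \to H^n(\fy_w)$ then factors through, or is controlled by, the maps $H^n(\fy) \to H^n(\fy_{y_i})$, and since $\alpha$ dies in each $H^n(\fy_{y_i})$ by hypothesis, we want to conclude it dies in $H^n(\fy_w)$.

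Here is the order I would carry it out. First, fix $w = v_x \in \valsgf$ corresponding to $x$ a codimension-one point of a regular model $\sgm$ of $\sgf$, and let $A = \wh{\ms O}_{\sgm,x}$, a complete DVR with fraction field $\sgf_w$; then $\fy_w = \fy \otimes_\sgf \sgf_w$ is the semi-global field $\sgf_w(\gy_{\sgf_w})$ over the complete discretely valued field $\sgf_w$. The point is that $\shasgf n$-type and $\shafy n$-type kernels are already understood over semi-global fields: by Corollary~\ref{mtol} (applied over the complete discretely valued field $\sgf_w$, whose residue characteristic divides that of $\dvf$, so $m$ is still prime to it), the natural map $H^n(\fy_w) \to \prod_{u} H^n((\fy_w)_u)$, where $u$ ranges over the divisorial valuations of the semi-global field $\fy_w$ over $\sgf_w$, is injective for $n \ge 2$. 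So it suffices to show that the image of $\alpha$ in $H^n((\fy_w)_u)$ vanishes for every such $u$. Second, I would identify each divisorial valuation $u$ on $\fy_w = \sgf_w(\gy_{\sgf_w})$ with (the completion at) a divisorial valuation on $\fy$ itself: a codimension-one point on a normal model of $\fy_w$ over $\ms O_{\sgf_w}$ lies on a model that, via approximation/spreading-out from the complete local ring $A$ back to $\ms O_{\sgm,x}$ and then to $\sgm$, comes from a codimension-one point $y$ on a normal model $\my$ of $\fy$ over $\dvr$ — invoking Lemma~\ref{flattening} as in Proposition~\ref{comparisonHnr} to arrange that the relevant centers are genuinely divisorial after blowing up. Then $(\fy_w)_u$ is a field through which $\fy \to \fy_y$ factors (or into which $\fy_y$ embeds compatibly), so the vanishing of $\alpha$ in $H^n(\fy_y)$, which holds because $y \in \valfy$, forces vanishing in $H^n((\fy_w)_u)$. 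Third, combining these two steps with the injectivity from Corollary~\ref{mtol} over $\sgf_w$ gives $\alpha_{\fy_w} = 0$, and since $w \in \valsgf$ was arbitrary this is exactly $\alpha \in \shasgf n(\fy)$.

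The hypothesis $[\sgf(\zeta_m):\sgf]$ prime to $m$ is used precisely to be allowed to apply Theorem~\ref{sha mixed thm}/Corollary~\ref{mtol} over the auxiliary complete discretely valued field $\sgf_w$: one needs $[\sgf_w(\zeta_m):\sgf_w]$ prime to $m$, which follows since $\sgf_w \supseteq \sgf$ and the degree over $\sgf_w$ divides the degree over $\sgf$. Note that $n \ge 2$ (rather than $n \ge 3$ as in Proposition~\ref{comparisonHnr}) suffices here because we only need the \emph{injectivity} statement of Corollary~\ref{mtol}, which holds for $n \ge 2$, and not the finer residue analysis used for the reverse inclusion.

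The main obstacle I expect is the second step: carefully matching up the divisorial valuations of the ``relative'' semi-global field $\fy_w$ over $\sgf_w$ with divisorial valuations of $\fy$ over $\dvr$, i.e.\ checking that every normal model of $\fy_w$ over $\ms O_{\sgf_w}$ is dominated by the base change of a normal model of $\fy$ over $\dvr$ (up to the blowups permitted by Lemma~\ref{flattening}), so that each completion $(\fy_w)_u$ genuinely receives $\fy_y$ for some $y \in \valfy$. This is the bookkeeping analogue of the factorization ``$H^n(\fy) \to H^n(\fy_w)$ factors through $H^n(\fy_y)$'' that appeared in the vertical-divisor case of Proposition~\ref{comparisonHnr}, now run in the opposite logical direction and for all such $u$ simultaneously; the flattening lemma and properness (valuative criterion) are the tools that make it go through.
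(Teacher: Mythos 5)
There is a genuine gap at the step you yourself flag as the main obstacle, and it is not mere bookkeeping. After reducing, via Corollary~\ref{mtol} applied to the semi-global field $\fy_w$ over $\sgf_w$, to showing that $\alpha$ dies in $H^n((\fy_w)_u)$ for \emph{every} divisorial valuation $u$ of $\fy_w$ over the valuation ring of $\sgf_w$, you claim each such $u$ can be matched with some $y\in\valfy$ so that $\fy\to(\fy_w)_u$ factors through $\fy_y$. This fails: since $\sgf_w$ has infinite transcendence degree over $\sgf$, the extension $\fy_w/\fy$ is not of finite type, Lemma~\ref{flattening} does not apply to it, and there are divisorial valuations of $\fy_w$ whose restriction to $\fy$ is not divisorial or is even trivial. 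For instance, take $u$ to be the valuation given by the horizontal prime divisor on a model of $\fy_w$ that is the closure of a closed point of the generic fiber $\gy_{\sgf_w}$ lying over the \emph{generic} point of $\gy$ (e.g.\ $t=c$ on $\mathbb P^1_{\ms O_{\sgf_w}}$ with $c$ transcendental over $\sgf$): then $u|_\fy$ is trivial, no $\fy_y$ exists, and the hypothesis $\alpha\in\shafy n(\fy)$ gives no direct control of the image of $\alpha$ in $H^n((\fy_w)_u)$. Restricting the product in Corollary~\ref{mtol} to the valuations that do come from $\fy$ is not an option, since injectivity is only known for the full set. So the reduction ``only injectivity of Corollary~\ref{mtol} is needed, no finer residue analysis'' is exactly where the argument breaks; relatedly, your proposed proof would never use the hypothesis that $[\sgf(\zeta_m):\sgf]$ is prime to $m$ (Corollary~\ref{mtol} is unconditional), which is a warning sign, since that hypothesis is genuinely used.

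The paper's proof takes a different route precisely to get around this two-dimensional difficulty. One first reduces by restriction-corestriction to the case $\zeta_m\in\sgf$, writes $\alpha=\sum_i(a_{i1})\cdots(a_{in})$ as a sum of symbols via Bloch--Kato, and chooses a regular model $\my_x$ of $\fy$ over $\ms O_{\sgm,x}$ on which $\cup\Supp(a_{ij})$ has normal crossings. Then, instead of the divisorial local-global principle over $\sgf_w$, one applies \cite[Theorem~3.2.3(i)]{HHK:LGGC} over $\wh{\ms O}_{\sgm,x}$ with respect to the \emph{points of the closed fiber} of $\wh\my_x$. Codimension-one points of the model do correspond to valuations in $\valfy$, so the hypothesis applies there; but at a closed point $P$ the overfield $\fy_P$ is the fraction field of a two-dimensional complete local ring, and the vanishing of $\alpha$ there is deduced from its vanishing at the two divisorial valuations $\pi,\delta$ through $P$ by the Parimala--Suresh Lemma~\ref{la:coho cl van} -- which is applicable only because of the symbol presentation with support in the normal-crossings locus. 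Some substitute for this local two-variable argument (or an equivalent device) is indispensable; your plan as written has no mechanism to supply it.
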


\begin{proof}
We first observe that it suffices to prove the lemma in the special case that $\zeta_m \in \sgf$.  To see this, assume that the result holds in this special case, and in the general case let $\alpha\in \shafy n(\fy) \subseteq H^n(\fy)$.
The restriction $\beta$ of $\alpha$ to $H^n(\fy(\zeta_m))$ lies in $\Sha^n_{\fy(\zeta_m)}(\fy(\zeta_m))$, and so by the special case it follows that $\beta \in \Sha^n_{\sgf(\zeta_m)}(\fy(\zeta_m))$.  That is, for every valuation $v' \in \Omega_{\sgf(\zeta_m)}$ lying over a valuation $v \in \Omega_{\sgf}$, the image $\beta_{v'}$ of $\beta$ in $H^n(\fy(\zeta_m)_{v'})$ is trivial.  Here $\beta_{v'}$ is the restriction of $\alpha_v$ to $H^n(\fy(\zeta_m)_{v'})$, where $\alpha_v$ is the image of $\alpha$ in $H^n(\fy_v)$.  But 
$[\fy(\zeta_m)_{v'} : \fy_v]$ is prime to $m$, so this restriction map is injective (by restriction-corestriction).  Hence each $\alpha_v$ is trivial; i.e., $\alpha$ lies in $\shasgf n(\fy)$, as asserted.

So for the remainder of the proof we assume that $\zeta_m \in F$, and we identify $H^n(\fy)$ with $H^n(\fy,\mu_m^{\otimes n})$.  Let $\alpha\in \shafy n(\fy) \subseteq H^n(\fy) = H^n(\fy,\mu_m^{\otimes n})$.  
Using the Bloch-Kato conjecture \cite{Voe11} (as in the proof of Theorem~\ref{sha mixed thm} above), we can write $\alpha=\sum_i (a_{i1})\cdot\ldots\cdot (a_{in})$, where  
$(a_{ij}) \in H^1(\fy,\mu_m)$, for some $a_{ij}\in \fy^\times$.
By definition, $\alpha$ maps to zero in $H^n(\fy_w,\mu_m^{\otimes n})$ for every $w\in \Omega_\fy$; and we wish to show that it maps to zero in $H^n(\fy_v,\mu_m^{\otimes n})$ for every $v \in \Omega_\sgf$.

Take $v\in \Omega_{\sgf}$, corresponding to a codimension one point $x$ on a regular model  $\sgm$ of $\sgf$. The local ring  ${\mathcal O}_{\sgm,x}$ is a discrete valuation ring. 
Let $\my_x\to {\mathcal O}_{\sgm,x}$ be a regular  model of $\fy$ over ${\mathcal O}_{\sgm,x}$ such that $\cup \Supp(a_{ij})$ is a union of regular curves with normal crossings on $\my_x$.  Such a model exists by \cite[page 193]{Lip75}, since $\dvr$ and hence ${\mathcal O}_{\sgm,x}$ is excellent.  Thus 
$\hat{\my}_x:= \my_x\times_{{\mathcal O}_{\sgm,x}} \hat{\mathcal O}_{\sgm,x}$ is also regular, and $\cup \Supp(a_{ij})$ is a union of regular curves with normal crossings on $\hat{\my}_x$ as well.
Here $E_v = \sgf_v(\gy)$, where $\gy$ is the generic fiber of $\my_x$ over $\sgf$; and this is 
a semi-global field over $\sgf_v$ that contains $E$.  

We need to prove that the image of $\alpha$ in $H^n(\sgf_v(\gy)) = H^n(\sgf_v(\gy), \mu_m^{\otimes n})$ is zero. 
Since $n \ge 2$, we may apply \cite[Theorem 3.2.3(i)]{HHK:LGGC} to the curve $\hat{\my}_x$ over the complete discrete valuation ring
$\hat{\mathcal O}_{\sgm,x}$.  By that result, it is enough to prove that for every point $P$ on the closed fiber $\hat Z$ of $\hat {\my}_x$, the image of $\alpha$ in $H^n(\sgf_v(\gy)_P, \mu_m^{\otimes n})$ is zero.
(Here $\sgf_v(\gy)_P$ is the fraction field of the complete local ring of 
$\hat {\my}_x$ at the point $P$.)

We may identify the closed fiber $Z$ of $\my_x$ with $\hat Z$, and thereby regard the above point $P \in \hat Z$ as a point of $Z \subset \my_x$.  Then $\sgf_v(\gy)_P$ is identified with
the fraction field $\fy_P$ of the complete local ring $\hat{\mathcal O}_{\my_x,P}$ of $\my_x$ at $P$. 
If $P$ is a codimension one point of $\my_x$, then it corresponds to some $w \in \Omega_\fy$.  Thus $\alpha\in \shafy n(\fy)$ becomes zero over $\fy_w = \fy_P$, as needed.  On the other hand, if $P$ is a codimension two point of $\my_x$, then take a system of local parameters $(\pi, \delta)$ at $P \in \my_x$, with $\pi, \delta$ each corresponding to an element of $\Omega_\fy$. 
By the above condition on $\cup \Supp(a_{ij})$, we may choose these parameters so that $\pi$ and $\delta$ are the only primes of $\hat{\mathcal O}_{\my_x,P}$
that divide all the $a_{ij}$.  Since $\alpha\in \shafy n(\fy)$ becomes zero over $\fy_{\pi} \subseteq (E_P)_{\pi}$ and over $\fy_{\delta} \subseteq (E_P)_{\delta}$, Lemma~\ref{la:coho cl van} implies that $\alpha$ becomes zero over $\fy_P = \sgf_v(\gy)_P$, completing the proof.
\end{proof}

Combining Proposition \ref{comparisonHnr} and Lemma \ref{comparisonHnrB},
we obtain:

\begin{prop} \label{Sha agree}
Let $\fy$ be the function field of a projective curve over a semi-global field $\sgf$.  
Assume that $[\sgf(\zeta_m) : \sgf]$ is prime to $m$.  Then
$\shasgf n(\fy)\supseteq\shafy n(\fy)$ for all $n \ge 2$,
and $\shasgf n(\fy)=\shafy n(\fy) \subseteq H^n_{\nr}(\fy)$ for all $n \ge 3$.
\end{prop}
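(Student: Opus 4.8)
The plan is to deduce the proposition immediately from the two results that precede it, once we arrange that the curve carrying $\fy$ is regular. First I would note that the groups $\shafy n(\fy)$, $\shasgf n(\fy)$, and $H^n_{\nr}(\fy)$ depend only on the fields $\fy$ and $\sgf$, and not on any chosen model: by definition $\valfy$ (respectively $\valsgf$) is the set of \emph{all} divisorial discrete valuations on $\fy$ (respectively $\sgf$). Writing $\fy = \sgf(\gy)$ for a projective curve $\gy/\sgf$, we may therefore replace $\gy$ by its normalization, which is finite over $\gy$ and hence still projective over $\sgf$, and which is regular since it is normal of dimension one. Thus without loss of generality $\gy$ is regular, and both Proposition~\ref{comparisonHnr} and Lemma~\ref{comparisonHnrB} are applicable to $\fy$.

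Next, for $n \ge 2$, Lemma~\ref{comparisonHnrB} gives the containment $\shasgf n(\fy) \supseteq \shafy n(\fy)$; here one uses the standing hypothesis that $[\sgf(\zeta_m):\sgf]$ is prime to $m$. This is the first assertion of the proposition. Then, for $n \ge 3$, Proposition~\ref{comparisonHnr} (which needs no hypothesis on $\zeta_m$) supplies the reverse containment $\shasgf n(\fy) \subseteq \shafy n(\fy)$ together with $\shafy n(\fy) \subseteq H^n_{\nr}(\fy)$. Combining the two yields $\shasgf n(\fy) = \shafy n(\fy) \subseteq H^n_{\nr}(\fy)$ for all $n \ge 3$, as claimed.

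I do not expect any real obstacle here, since the substantive work has already been carried out in Proposition~\ref{comparisonHnr} and Lemma~\ref{comparisonHnrB}; the proof is a formal combination of the two. The only point that merits a sentence of justification is the reduction to a regular model, i.e.\ checking that passing to the normalization of $\gy$ leaves all of the $\Sha$-groups and the unramified cohomology group in the statement unchanged — which is immediate from their definitions in terms of all divisorial valuations. (Equivalently, one could have phrased Proposition~\ref{comparisonHnr} and the lemma for arbitrary projective curves at the outset, via the same reduction.)
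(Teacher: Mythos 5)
Your proposal is correct and follows essentially the same route as the paper, whose proof consists precisely of combining Proposition~\ref{comparisonHnr} and Lemma~\ref{comparisonHnrB}. Your additional remark --- passing to the normalization to replace the projective curve by a regular one, which is harmless because $\shasgf n(\fy)$, $\shafy n(\fy)$, and $H^n_{\nr}(\fy)$ depend only on the fields $\fy$ and $\sgf$ --- is a correct elaboration of a point the paper leaves implicit.
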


\section{Higher Local-Global Principles} \label{results}

\subsection{Local-global principles with respect to valuations on $\sgf$} \label{all vals}

We now turn to the question of when local-global principles hold for a function field $\fy$ over a semi-global field $\sgf$; i.e., when the corresponding obstruction groups $\shasgf n(\fy)$ vanish.  (Here and below, we preserve Notation~\ref{notn}, for our fixed value of $m>1$.)  Using results of the previous section, we treat the case where $\fy = \sgf(\mathbb P^1_{\fy})$ in Theorem~\ref{LGP for line} below; and we consider the situation where the ground field $\dvf$ is a local field in 
Theorem~\ref{LGP over local field}.  First we state some preliminaries.

If $\sgm$ is a regular model of a semi-global field over $\dvr$, then the associated {\em reduction graph} was defined in \cite[Section~6]{HHK:H1}, encoding the configuration of intersections of the irreducible components of the closed fiber.  In the case that the model $\sgm$ has semi-stable reduction, this graph is homotopy equivalent to the dual graph defined in \cite[p.~86]{DM} (and is in fact its barycentric subdivision; see \cite[Remark~6.1]{HHK:H1}).

\begin{lemma} \label{non-tree cover}
If $\sgf$ is a semi-global field, then
 there is a finite separable extension $\sgf'$ of $\sgf$ having a regular model whose reduction graph is not a tree.
\end{lemma}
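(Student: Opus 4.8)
The plan is to produce $\sgf'$ by base-changing to a ramified extension of the underlying complete discretely valued field $\dvf$ and then replacing the model by a semi-stable one whose dual graph contains a loop. First I would fix a regular model $\sgm$ of $\sgf$ over $\dvr$, with closed fiber $X_0$. By the semi-stable reduction theorem (for curves over the excellent, complete discrete valuation ring $\dvr$, after a finite separable — indeed tame, if one enlarges the residue field appropriately — extension of $\dvf$), there is a finite separable extension $\dvf_1/\dvf$ such that $\sgf\otimes_\dvf\dvf_1$ (or rather the semi-global field over $\dvf_1$ that it determines, after passing to a connected component and normalizing) admits a regular model $\sgm_1$ with semi-stable reduction. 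The reduction graph of $\sgm_1$ is then homotopy equivalent to the dual graph of the semi-stable closed fiber, by \cite[Remark~6.1]{HHK:H1}. So it suffices to arrange that this dual graph is not a tree, equivalently that the closed fiber has positive first Betti number, equivalently (since the fiber is connected and semi-stable) that it is not a tree of $\bP^1$'s meeting transversally.

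The heart of the argument is thus to force a loop into the special fiber. I would do this by choosing $\sgf$ carefully is not allowed — we are given an arbitrary semi-global field — so instead I pass to a further cover. Concretely, pick a closed point $P$ on a component $C$ of the closed fiber of the semi-stable model $\sgm_1$ that is a regular point of the total space but at which one wants to create a node; more robustly, take two distinct closed points $P_1,P_2$ lying on a single component $C\cong\bP^1$ of the closed fiber, and consider the degree-two (or, in residue characteristic $2$, a suitable degree-$\ell$ with $\ell$ odd) cover of $\sgf_1$ ramified along the two horizontal divisors through $P_1$ and $P_2$ and unramified elsewhere on the generic fiber. Such a separable extension $\sgf'$ of $\sgf_1$ exists: one writes down a Kummer (or Artin–Schreier–Witt, but we are prime to the residue characteristic) extension defined by a rational function on the generic curve with the prescribed divisor of zeros and poles supported at the chosen horizontal divisors. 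Taking the normalization of $\sgm_1$ in $\sgf'$ and then resolving and semi-stabilizing again, the two chosen horizontal branches get identified in a way that produces a cycle in the dual graph — two components of the new closed fiber meeting in two points, or a chain closing up — so the reduction graph is not a tree.

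The step I expect to be the main obstacle is controlling the combinatorics of the special fiber of the cover precisely enough to be certain a loop appears, rather than merely a longer tree. The cleanest way to guarantee this, and the route I would actually take in writing the proof, is to reduce to an explicit model: after the first semi-stable reduction, $\sgf_1$ contains a rational subfield $\dvf_1(t)$ over which it is finite, and one can choose the two horizontal divisors to be $(t=0)$ and $(t=\infty)$ on a component, so that the cover $\sgf' = \sgf_1(\sqrt{t\cdot u})$ for a suitable unit $u$ has a special fiber one can compute directly; the normalization separates and re-glues the two points, yielding a genuine cycle. Then invoke semi-stable reduction once more over $\dvf_1$ (no further field extension needed, or at worst one more separable extension, which is harmless) to get a regular model with semi-stable reduction whose dual graph, having a cycle, is not a tree; by \cite[Remark~6.1]{HHK:H1} its reduction graph is not a tree either. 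This gives the desired $\sgf'$, finite and separable over $\sgf$.
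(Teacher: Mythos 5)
There is a genuine gap at the crucial step, namely the claim that your cover actually produces a cycle in the reduction graph. A cover of degree $2$ (or $\ell$) of the generic fiber that is ``ramified along the two horizontal divisors through $P_1,P_2$ and unramified elsewhere'' does \emph{not} in general force a loop: in the simplest instance $\sgm_1=\mathbb P^1_\dvr$ with the horizontal divisors $(x=0)$ and $(x=\infty)$, the double cover $z^2=x$ is again $\mathbb P^1_\dvr$, whose reduction graph is a single vertex. Your explicit fallback $\sgf'=\sgf_1(\sqrt{t\cdot u})$ with $u$ a unit has the same defect: modulo the uniformizer the radicand restricts to a nonsquare rational function on the component $C$, so the special fiber of the normalization is an \emph{irreducible} double cover of $C$ ramified at two points, and no cycle need appear; the assertion that ``the normalization separates and re-glues the two points'' is exactly what fails here. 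To create a cycle one needs the radicand to become an $r$-th power modulo the uniformizer $t$ of $\dvr$, so that the cover splits completely over the generic point of the closed fiber while remaining connected (totally ramified) over two distinct closed points; it is this split-generically/glued-at-two-points configuration that forces a loop. This is precisely the paper's construction $z^r=(f^r-t)(g^r-t)$ over $\mathbb P^1_\dvr$, with $f,g$ monic irreducible with disjoint loci and $r>1$ prime to the residue characteristic, which is then transported to a general $\sgf$ by forming the normalization of the fiber product with a normal model of $\sgf$ over $\mathbb P^1_\dvr$ (after reducing to the case $\sgf/\dvf(x)$ separable, purely inseparable morphisms being homeomorphisms), and finally applying Lipman's desingularization, whose reduction graph surjects onto that of the normal model.

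A secondary problem is your opening appeal to semi-stable reduction after a finite \emph{separable} (even tame) extension of $\dvf$. The statement allows an arbitrary complete discretely valued field $\dvf$, possibly with imperfect residue field, and a semi-global field need not be the function field of a \emph{smooth} projective $\dvf$-curve; making the generic fiber smooth, let alone semi-stable, may require inseparable extensions, so this step is not justified at the stated level of generality. The paper's proof deliberately avoids semi-stable reduction altogether, working only with normal models, the explicit Kummer cover above, and the comparison of reduction graphs under desingularization.
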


\begin{proof}
It suffices to prove that there exists a normal model $\sgm$ of $\sgf'$ with this property, since there is then a desingularization $\til\sgm \to \sgm$ by the main theorem in \cite{Lip78}; and its reduction graph, which maps onto that of $\sgm$, is then also not a tree.

First consider the case that $\sgf = \dvf(x)$, with a model 
$\sgm = \mbb P^1_\dvr$.  Pick an integer $r>1$ that is prime to the residue characteristic of $\dvr$.  Let $t$ be a uniformizer of $\dvr$, let $f,g \in \dvr[x]$ be irreducible monic polynomials whose loci in $\sgm$ are disjoint, and let $\sgm'$ be the normalization of $\sgm$ in the finite extension $\sgf' :=\sgf[z]/(z^n - (f^n - t)(g^n - t))$.  Then $\sgm'$ is a normal model of $\sgf'$, and the reduction graph of $\sgm'$ is not a tree.

For the general case, we may write $\sgf$ as a finite extension of a purely transcendental field $\sgf_0 :=\dvf(x)$.  Let $\sgf_1$ be the maximal separable subextension of $\sgf/\sgf_0$; thus $\sgf/\sgf_1$ is purely inseparable.  Since purely inseparable morphisms induce homeomorphisms on the underlying topological spaces, it suffices to prove the lemma for $\sgf_1$; and so we may assume that the finite extension $\sgf/\sgf_0$ is separable.  Here $\sgm_0 := \mbb P^1_\dvr$ is a regular model of $\dvf(x)$ over $\dvr$, and the normalization $\sgm$ of $\sgm_0$ in $\sgf$ is a normal model of $\sgf$, which is equipped with a finite generically separable morphism $\phi:\sgm \to \sgm_0$.
Pick two distinct closed points $P,Q$ on the closed fiber of $\sgm_0$ away from the point at infinity, over which $\sgm$ is regular, and which do not lie in the closure of any point in the generic fiber of $\sgm_0$ at which $\phi$ is branched.  The points $P,Q \in \mbb A^1_\dvr \subset \mbb P^1_\dvr$ are given by maximal ideals $(f,t)$ and $(g,t)$ in $\dvr[x]$, such that $f,g \in \dvr[x]$ are irreducible monic polynomials having disjoint loci in $\sgm_0$.  Choose $r>1$ relatively prime to the residue characteristic of $\dvr$.  Let $\sgm'_0 \to \sgm_0 = \mbb P^1_\dvr$ be the branched cover given in the previous paragraph, and let $\sgm'$ be the normalization of $\sgm'_0 \times_{\sgm_0} \sgm$.  The reduction graph of $\sgm'$ is not a tree, and  its function field $L$ is then as asserted.
\end{proof}

\begin{thm} \label{LGP for line}
Let $\sgf$ be a semi-global field over a complete discretely valued field $\dvf$.  Let $\fy = \sgf(t)$, the function field of $\mathbb P^1_\sgf$. 
Then:
\renewcommand{\theenumi}{\alph{enumi}}
\renewcommand{\labelenumi}{(\alph{enumi})}
\begin{enumerate}
\item\label{LGP line i}
$\shasgf 1(\fy)$ is trivial if and only if the reduction graph of a regular model $\sgm$ 
of $\sgf$ over $\dvr$ is a tree;
\item  \label{LGP line ii}
$\shasgf 2(\fy)$ is non-trivial;
\item \label{LGP line iii} 
$\shasgf n(\fy)$ is trivial for $n > 2$.
\end{enumerate}
\end{thm}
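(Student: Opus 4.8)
The plan is to prove each of the three parts in turn, using the tools already assembled: the Faddeev-type short exact sequence of Proposition~\ref{ShaD} (with $\Omega = \valsgf$), the triviality of $\shasgf n(\sgf,\mu_m^{\otimes r})$ from Corollary~\ref{mtol} (valid for $n\ge 2$, all $r$), and the local-global results for $H^1$ recalled in Section~\ref{ccl}, together with the reduction-graph machinery of \cite{HHK:H1} and Lemma~\ref{non-tree cover}.

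For part \eqref{LGP line iii} (the statement I am asked to prove), the key observation is that Proposition~\ref{ShaD}, applied with $\sgf$, the overfields $\sgf_v$ for $v\in\valsgf$, $\fy=\sgf(t)$, and $\mu=\mu_m^{\otimes(n-1)}=\mu(1)$ where $\mu(-1)=\mu_m^{\otimes(n-2)}$, yields an exact sequence
\[
0 \to \shasgf n(\sgf,\mu_m^{\otimes(n-1)}) \to \shasgf n(\fy,\mu_m^{\otimes(n-1)}) \to \coprod_{y\in\mathbb A^{1\,(1)}_\sgf} \Sha^{n-1}_{\valsgf}(\kappa(y),\mu_m^{\otimes(n-2)}) \to 0.
\]
Here one must be careful that this is the correct $\Sha$ group: for a closed point $y\in\mathbb A^1_\sgf$, the field $\kappa(y)$ is a finite extension of $\sgf$, hence itself a semi-global field over $\dvf$ (it is the function field of a curve over $\dvf$, being finite over $\sgf$), and the overfields appearing are $\kappa(y)\otimes_\sgf\sgf_v$ as in the definition in Section~\ref{setup}; since $\valsgf$ restricts appropriately (this is exactly the content of Lemma~\ref{flattening}, that divisorial valuations on $\sgf$ pull back to divisorial valuations, equivalently places on $\kappa(y)$ arising from $\valsgf$ are among $\Omega_{\kappa(y)}$), the term $\Sha^{n-1}_{\valsgf}(\kappa(y),\mu_m^{\otimes(n-2)})$ is sandwiched inside (or equals, after checking) $\shasgf{n-1}(\kappa(y),\mu_m^{\otimes(n-2)})$. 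Now for $n>2$ we have $n\ge 3$, so the leftmost term $\shasgf n(\sgf,\mu_m^{\otimes(n-1)})$ vanishes by Corollary~\ref{mtol}, and $n-1\ge 2$, so each term $\Sha^{n-1}_{\valsgf}(\kappa(y),\mu_m^{\otimes(n-2)})$ vanishes as well by Corollary~\ref{mtol} applied to the semi-global field $\kappa(y)$ (noting that $m$ is still prime to the residue characteristic). Hence $\shasgf n(\fy)=0$, as claimed.

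For part \eqref{LGP line i}, the same exact sequence with $n=1$ gives
\[
0 \to \shasgf 1(\sgf,\Z/m\Z) \to \shasgf 1(\fy,\Z/m\Z) \to \coprod_{y} \Sha^{0}_{\valsgf}(\kappa(y),\mu_m^{\otimes(-1)}) \to 0,
\]
and the rightmost term is a $\Sha$ group of $H^0$, which vanishes (the restriction maps on $H^0=\mu_m^{\otimes(-1)}(\kappa(y))$ to the overfields are injective), so $\shasgf 1(\fy)\cong\shasgf 1(\sgf)$; and by Section~\ref{ccl} together with \cite[Cor.~6.5]{HHK:H1} and the description of $\Sha^1$ via the reduction graph in \cite[Section~6]{HHK:H1}, this vanishes precisely when the reduction graph of a (equivalently, any) regular model is a tree, the "only if" direction being supplied by Lemma~\ref{non-tree cover}. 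For part \eqref{LGP line ii}, with $n=2$ the sequence reads $0\to\shasgf 2(\sgf)\to\shasgf 2(\fy)\to\coprod_y\Sha^1_{\valsgf}(\kappa(y),\Z/m\Z)\to 0$; the left term vanishes, but the right term need not, since by part \eqref{LGP line i} applied to $\kappa(y)$ (which is again semi-global) one can, using Lemma~\ref{non-tree cover}, choose a closed point $y$ whose residue field $\kappa(y)$ is a finite separable extension of $\sgf$ admitting a regular model with non-tree reduction graph, forcing $\Sha^1_{\valsgf}(\kappa(y),\Z/m\Z)\ne 0$ and hence $\shasgf 2(\fy)\ne 0$.

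The main obstacle is the bookkeeping in matching the $\Sha$ groups $\Sha^{n-1}_{\valsgf}(\kappa(y),-)$ that come out of Proposition~\ref{ShaD} with the groups $\shasgf{n-1}(\kappa(y),-)$ to which Corollary~\ref{mtol} directly applies: one must verify that a closed point $y$ of $\mathbb A^1_\sgf$ has residue field that is genuinely a semi-global field over $\dvf$ (i.e.\ the function field of a projective $\dvf$-curve, with $\dvf$ algebraically closed in it or after passing to the algebraic closure of $\dvf$ in $\kappa(y)$, which is a finite extension of $\dvf$ and still complete discretely valued), and that the family of overfields $\{\kappa(y)_v : v\in\valsgf\}$ obtained by restriction is exactly (or at least cofinal in, which suffices for vanishing) the family $\{\kappa(y)_w : w\in\Omega_{\kappa(y)}\}$ — this is where Lemma~\ref{flattening} does the real work, guaranteeing that restriction sends divisorial valuations to divisorial valuations, so no smaller and potentially non-vanishing obstruction group sneaks in. Once that identification is in place, parts \eqref{LGP line i}--\eqref{LGP line iii} all follow formally from the single exact sequence together with Corollary~\ref{mtol}, \cite[Cor.~6.5]{HHK:H1}, and Lemma~\ref{non-tree cover}.
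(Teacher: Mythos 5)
Your proposal is correct, and parts~(\ref{LGP line i}) and~(\ref{LGP line ii}) are argued just as in the paper: Proposition~\ref{ShaD} with $\Omega=\valsgf$, the vanishing of $\Sha^0$, the identification from \cite{HHK:H1} of $\shasgf{1}$ of a semi-global field with $\Hom(\pi_1(\Gamma),\Z/m\Z)$, and Lemma~\ref{non-tree cover} to produce a closed point $y$ whose residue field has a non-tree reduction graph. For part~(\ref{LGP line iii}) you take a genuinely different route. The paper first applies Proposition~\ref{comparisonHnr} to see that any class in $\shasgf{n}(\fy)$ is unramified, hence maps to zero in the right-hand term of the sequence of Proposition~\ref{ShaD} and therefore comes from $\shasgf{n}(\sgf)$, which vanishes by Corollary~\ref{mtol}. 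You instead kill both outer terms of that sequence, handling the right-hand term by noting that every divisorial valuation of the finite extension $\kappa(y)$ restricts to a nontrivial, hence (by Lemma~\ref{flattening}, after arranging a morphism of models) divisorial, valuation in $\valsgf$, so that $\Sha^{n-1}_{\valsgf}(\kappa(y),\mu_m^{\otimes(n-2)})$ is contained in the obstruction group of the semi-global field $\kappa(y)$ taken with respect to all of its own divisorial valuations, which vanishes by Corollary~\ref{mtol} since $n-1\ge 2$. This is valid, and in effect it inlines, for $\gy=\mathbb P^1_\sgf$, the horizontal-divisor portion of the proof of Proposition~\ref{comparisonHnr}, avoiding that proposition entirely; the paper's route leans on Proposition~\ref{comparisonHnr}, which it needs anyway for general curves in Theorem~\ref{LGP over local field}. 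Two small bookkeeping corrections: in part~(\ref{LGP line ii}) the containment you actually need is $\Sha^1_{\Omega_{\kappa(y)}}(\kappa(y),\Z/m\Z)\subseteq\Sha^1_{\valsgf}(\kappa(y),\Z/m\Z)$, which rests on the fact that every extension to $\kappa(y)$ of a valuation in $\valsgf$ is again divisorial (normalize a model of $\sgf$ in $\kappa(y)$); Lemma~\ref{flattening} gives the opposite (restriction) direction, the one used in part~(\ref{LGP line iii}). Also, Lemma~\ref{non-tree cover} plays no role in part~(\ref{LGP line i}); the equivalence with the tree condition there is entirely the content of the results cited from \cite{HHK:H1}.
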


\begin{proof}
Assume first that $n>2$. Since $n \ge 3$,
$\shasgf n(\fy) \subseteq H^n_{\nr}(\fy)$
by Proposition~\ref{comparisonHnr}.
So by Proposition~\ref{ShaD}, each element of $\shasgf n(\fy)$ is in the image of $\shasgf n(\sgf) = \Sha^n_{\valsgf}(\sgf,\mu_m^{\otimes n-1})$.
By Corollary~\ref{mtol}, this last group is zero.  
This gives part~(\ref{LGP line iii}).

Assume now that $n=2$.  By Lemma~\ref{non-tree cover}, there is a finite extension $L/\sgf$ such that the reduction graph of a model of $L$ is not a tree. 
Let $y \in \gy = \mathbb P^1_\sgf$ be a closed point with  residue field $L$. Then in the right term of the sequence in Proposition~\ref{ShaD}, for $\kappa(y)=L$ we
have that $\shasgf 1(L)\neq 0$ (see \cite[Proposition 5.1, Corollary 5.3 and Corollary 6.5]{HHK:H1}).  Hence
$\shasgf 2(\fy) \neq 0$ as well, by Proposition~\ref{ShaD}, proving part~(\ref{LGP line ii}).

Finally, in  the case $n=1$, 
the last term in the exact sequence in Proposition~\ref{ShaD} is trivial since $\Sha^0$ is trivial.  So by that result,
$\shasgf n(\sgf) = \shasgf n(\fy)$.  The former group is trivial if and only if the reduction graph is a tree, by the results from \cite{HHK:H1} cited above in the $n=2$ case.  So assertion~(\ref{LGP line i}) follows.
\end{proof}

\begin{rem}
One has a similar result to part~(\ref{LGP line iii}) above: 
$\Sha^n_{\Omega}(\fy) := \Sha_{\Omega}^n(\fy, \mu_m^{\otimes n-1}) 
= \ker[H^n(\fy)\to \prod_{x\in\Omega}  H^n(\fy_x, \mu_m^{\otimes n-1})]$
is trivial for $n>2$, where $\Omega$ is the set of points on the closed fiber of $\sgm$; 
$\sgf_x = \frf(\wh {\ms O}_{\sgm, x})$ for $x \in \Omega$;
and $\fy_x = \frf(\fy \otimes_\sgf \sgf_x)$.
Namely, as before we have $\Sha^n_{\Omega}(\fy)\subseteq H^n_{\nr}(\fy)$, and so 
$\Sha^n_{\Omega}(\fy)$ is contained in the image of 
$\Sha^n_\Omega(\sgf) := \Sha_{\Omega}^n(\sgf, \mu_m^{\otimes n-1})$, which vanishes by Section~\ref{lgp pts}.
\end{rem}

\begin{rem} \label{Sha noninclus ex}
Theorem~\ref{LGP for line} shows that
Proposition~\ref{comparisonHnr} does not hold in general for $n=2$.  Namely, take $\dvf=\mathbb C((t))$, $\sgf = k(x)$, and $\fy=\sgf(y)$, with $\sgm = \mathbb P^1_{\mathbb C[[t]]}$ and $\my = \mathbb P^1_\sgm$. 
Then $\shasgf 2(\fy)$ is non-trivial by Theorem~\ref{LGP for line}(\ref{LGP line ii}). 
But we claim that
$\shamy 2(\fy)$ is trivial and hence so is $\shafy 2(\fy)$. To see the
triviality of $\shamy 2(\fy)$, 
recall that $\shamy 2(\fy) \subseteq H^2_{\nr}(\fy)$.
Since the unramified cohomology group $H^2_{\nr}(-)$ is a stable birational invariant, it
satisfies the conditions of specialization and homotopy (see \cite[p.~216]{CT80}),
and the natural map $H^2(k)\to H^2_{\nr}(\fy)$ is an isomorphism
by \cite[Th\'eor\`eme 1.5]{CT80}.  
(This conclusion can also be obtained by the Faddeev sequence and a diagram chase.)
But since $k = \mathbb C((t))$ is a $C_1$ field,
we have $H^2(k) = 0$, showing that $H^2_{\nr}(\fy) = 0$, and hence $\shamy 2(\fy)
= 0$ as claimed.
\end{rem}

Even if $\fy$ is not a purely transcendental extension of $\sgf$, we can still obtain  local-global principles if $\dvf$ is a local field. Theorem \ref{LGP over local field} below is a consequence of deep results of Saito-Sato, of de Jong and  of Gabber, and of Kerz-Saito. We thank J.-L.~Colliot-Th\'el\`ene for pointing out a theorem of Saito-Sato for the case $n=3$.

\begin{thm} \label{LGP over local field}
Let $\dvf$ be a non-archimedean local field, i.e.\ a complete discretely valued field with finite residue field.
Let $\sgf$ be a semi-global field over $\dvf$, 
and let $\fy$ be the function field of a regular projective curve $\gy$ over $\sgf$.  Then
$\shasgf n(\fy)=0, n \ge 3$.
\end{thm}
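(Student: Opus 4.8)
The strategy is to reduce the vanishing of $\shasgf n(\fy)$ to the vanishing of $H^n_{\nr}(\fy)$, and then to invoke the known finiteness/vanishing of unramified cohomology for a proper regular scheme that is flat over the ring of integers of the local field $\dvf$. First I would observe that since $\dvf$ is a local field and $n\ge 3$, the cohomological dimension of $\fy = \sgf(\gy)$ is bounded; more precisely $\cd(\dvf) = 2$, so $\cd(\sgf)\le 3$ and $\cd(\fy)\le 4$, and we will only need to treat the finitely many degrees $n = 3, 4$ (all higher $H^n(\fy)$ vanish outright, except possibly an issue at the prime $p$ equal to the residue characteristic, which one handles separately using that $\mu_m$ is prime to that characteristic and standard Gersten-type arguments — or by noting that for $n>\cd$, there is nothing to prove). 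By Proposition~\ref{comparisonHnr}, $\shasgf n(\fy)\subseteq H^n_{\nr}(\fy)$ for $n\ge 3$, so it suffices to show $H^n_{\nr}(\fy)=0$ for $n\ge 3$.

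Next I would choose a regular proper flat model $\my$ of $\fy$ over $\dvr$: such a model exists because $\fy$ has transcendence degree $2$ over $\dvf$ and $\dvr$ is excellent (indeed complete), so one takes a projective $\dvr$-model and resolves singularities via the results of de Jong and Gabber (or Lipton in dimension reasons appropriate here). The key input is then the theorem of Saito–Sato (for $n=3$) and Kerz–Saito (in the relevant range) on the Bloch–Kato–type conjectures and the behavior of cycle class maps / étale cohomology with finite coefficients for arithmetic schemes over a local ring: for a regular proper flat scheme $\my$ of dimension $d$ over the ring of integers of a $p$-adic field, the unramified cohomology $H^n_{\nr}$ of its function field with $\mu_m^{\otimes n-1}$ coefficients vanishes for $n$ larger than $d$ (here $d=2$, so $n\ge 3$). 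This is exactly the regime predicted by the Kato conjectures, proven by Kerz–Saito (building on Jannsen, Saito–Sato, de Jong's alterations, and Gabber's refinements). I would cite these precisely and check that the hypotheses — regularity of $\my$, properness and flatness over $\dvr$, the coefficient $m$ being invertible on the closed fiber — are all met in our situation.

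Finally, I would assemble the argument: $\shasgf n(\fy)\subseteq H^n_{\nr}(\fy) = H^n_{\nr}(\fy/\my)$ (the two agree once $\my$ is a good enough model, or at least the former is contained in the latter which is what we bound), and the cited theorems give $H^n_{\nr}(\fy)=0$ for $n\ge 3$; hence $\shasgf n(\fy)=0$ for $n\ge 3$, as claimed. I would also remark that one could alternatively work locally: for each $v\in\valsgf$, the completion $\sgf_v$ still gives rise, via the model $\my$, to a complete two-dimensional local picture to which Lemma~\ref{la:coho cl van} and \cite[Theorem~3.2.3(i)]{HHK:LGGC} apply — but the cleaner route is through unramified cohomology and the Kato-conjecture machinery.

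**Main obstacle.** The delicate point is not the reduction step (which is immediate from Proposition~\ref{comparisonHnr}) but rather correctly quoting and applying the deep results on vanishing of unramified cohomology in degrees above the dimension for arithmetic schemes over a $p$-adic ring of integers — i.e., making sure the version of the Saito–Sato / Kerz–Saito theorems we invoke is stated for regular proper flat models of dimension exactly $2$ over $\dvr$, with $\mu_m^{\otimes\bullet}$ coefficients for $m$ prime to the residue characteristic, and that there is no subtlety at the degree $n = \cd(\fy)$ boundary or at wildly ramified primes. Verifying that a suitable regular proper flat model $\my$ exists in mixed characteristic (so that these theorems are even applicable) and that passing to $\my$ does not lose the containment $\shasgf n(\fy)\subseteq H^n_{\nr}(\fy)$ is where the care is needed; once those citations are pinned down, the proof is short.
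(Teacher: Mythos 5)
Your overall strategy matches the paper's: reduce via Proposition~\ref{comparisonHnr} to showing $H^n_{\nr}(\fy)=0$ for $n\ge 3$, and then invoke Saito--Sato (for $n=3$) and Kerz--Saito's proof of the Kato conjecture (for $n=4$), with cohomological dimension handling $n\ge 5$. But there is a genuine gap at the step you yourself flag as delicate: you need a \emph{regular} proper flat model of $\fy$ itself over $\dvr$, and the results you cite do not provide one. The model in question is a $3$-dimensional scheme in mixed characteristic; Lipman's desingularization applies only to $2$-dimensional schemes, and de Jong's theorem with Gabber's refinement produces an \emph{alteration} $\my'\to\my$ --- a proper, generically finite map of degree prime to a chosen prime --- not a resolution, so its function field $\fy'$ is a nontrivial finite extension of $\fy$, and applying Saito--Sato or Kerz--Saito to $\my'$ only gives $H^n_{\nr}(\fy')=0$, not directly $H^n_{\nr}(\fy)=0$. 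Your proposal simply asserts that the model of $\fy$ can be taken regular (with, for $n=3$, the simple normal crossings condition on the special fiber that Saito--Sato also requires), and this is exactly the point the paper's argument is designed to avoid.

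The paper closes this gap as follows: first reduce to the case that $m$ is prime (the general case then follows by the divisibility argument of Corollary~\ref{mtol}, with $\shasgf n(\sgf)$ replaced by $\shasgf n(\fy)$); then take a normal proper model $\my$ of $\fy$, apply de Jong--Gabber to obtain an alteration $\my'\to\my$ of degree \emph{prime to $m$} with $\my'$ regular, proper over $\dvr$, and special fiber a simple normal crossings divisor; deduce $H^n_{\nr}(\fy')=0$ for $n\ge 3$ from Saito--Sato, Kerz--Saito, and cohomological dimension; and finally use that $[\fy':\fy]$ is prime to $m$, so that by restriction--corestriction the map $H^n_{\nr}(\fy)\to H^n_{\nr}(\fy')$ is injective, giving $H^n_{\nr}(\fy)=0$ and hence $\shasgf n(\fy)=0$. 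To make your argument complete you would either need to reproduce this alteration-plus-transfer mechanism (including the reduction to $m$ prime, which is what makes a prime-to-$m$ alteration available), or invoke a genuine resolution theorem for arithmetic threefolds with control of the special fiber, which is a much heavier input and not what your citations supply.
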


\begin{proof}
Assume first that $m$ is prime.
Let $\my$ be a normal proper model of $\fy$ over $\dvr$, the ring of integers of $\dvf$. Let $\my'\to \my$ be an alteration morphism of degree prime to $m$, with $\my'$ proper over $\dvr$ and regular, and with the special fiber a  simple normal crossings divisor (see the uniformization theorem of de Jong \cite[Theorem~4.1]{dJ96} and its refinement by Gabber \cite[Theorem~2.4]{IT14}; cf.\ also \cite[Theorem~1.2.5]{Tem17}).   
Let $\fy'$ be the field of functions of $\my'$.

For $n=3$, since $\my'$ is a regular scheme of total dimension $3$ that is proper over $\dvr$, with special fiber a simple normal crossings divisor, a theorem of Saito and Sato (\cite[Theorem 2.13]{SS10}, see also \cite[Th\'eor\`eme 3.16]{CT}) yields the vanishing of the group $H^3_{\nr}(\fy'/\my')$. 
(Here $H^3_{\nr}(\fy'/\my') = KH_3(\my',\Z/m\Z)$ in the notation of \cite{SS10}, since $\dim \my' =3$.)
Hence its subgroup $H^3_{\nr}(\fy')$ also vanishes. 

For $n=4$, since $\my'$ is a regular scheme that is proper over $\dvr$,
the Kato conjecture (proved by Kerz and Saito, \cite[Theorem 8.1]{KerzSaito}) 
asserts that the group $H^4_{\nr}(\fy'/\my')$ is zero. Hence so is its subgroup $H^4_{\nr}(\fy')$. 

For $n \ge 5$, the group $H^n(\fy')$ is zero by cohomological dimension.

 Since $[\fy':\fy]$ is prime to $m$, the map $H^n_{\nr}(\fy) \to H^n_{\nr}(\fy')$ is injective, and thus $H^n_{\nr}(\fy)=0$ for $n \ge 3$.  Hence so is $\shasgf n(\fy)$, which is contained in $H^n_{\nr}(\fy)$ by Proposition~\ref{comparisonHnr}. This concludes the case when $m$ is prime. The general case follows by the same argument as in the proof of Corollary \ref{mtol} (replacing $\shasgf n(\sgf)$ by $\shasgf n(\fy)$ in that proof).
\end{proof}

\begin{rem}
In the particular case that $\my$ is a smooth conic over $\sgf$, the vanishing of 
$\shasgf 3(\fy)$ also follows via \cite[Theorem 6.4]{PS16}
(which assumed that $\dvf$ is $p$-adic, but carries over to more general local fields $\dvf$ if $\sgf$ is the function field of a smooth $\dvf$-curve).
\end{rem}

\begin{rem}
Under the hypothesis that $[\sgf(\zeta_m) : \sgf]$ is prime to $m$, a restriction-corestriction argument shows that the above vanishing results about $\shasgf n(\fy)= \Sha^n_{\valsgf}(\fy,\mu_m^{\otimes (n-1)})$
also hold for $\Sha^n_{\valsgf}(\fy,\mu_m^{\otimes r})$ for any $r$.
The same is the case for the results in the next subsection about the vanishing of $\shasgfz n(\sgf)$
and $\shasgfz n(\fy)$.
\end{rem}

\begin{rem}
A related situation, over a global field rather than a local field, was considered by Jean-Louis Colliot-Th\'el\`ene and Bruno Kahn in \cite{CTK13}, in the case $n=3$.  
Section~8 there introduced a $\Sha$-type group of unramified classes  
$\Sha H^3_{\nr}(X, \mathbb Q/\mathbb Z(2)):=\ker[H^3_{\nr}(K(X), \mathbb Q/\mathbb Z(2)\to \prod_v H^3_{\nr}(K_v(X_v), \mathbb Q/\mathbb Z(2))]$, where
$X$ is a smooth projective variety over a global field $K$, and where the product is over all places $v$ of $K$.
Conjecture 8.9 in \cite{CTK13} asserts in particular that the group $\Sha H^3_{\nr}(X, \mathbb Q/\mathbb Z(2))$ vanishes if $X$ is a geometrically ruled surface over a function field in one variable over a finite field.
\end{rem}

\subsection{Local-global principles on the general fiber} \label{gen fiber vals}

The goal of this section is to investigate the case when the chosen set of discrete valuations does not include the valuations centered on the closed fiber.  Namely, we consider the set $\valsgf_{,0}$ of discrete valuations 
that are trivial on $\dvf$; i.e., those that correspond
to closed points of the generic fiber of $\sgm$.  We write   $\shasgfz n$  for the corresponding  $\Sha$-groups, and in particular
we write $\shasgfz n(\sgf) = \Sha^n_{\valsgf_{,0}}(\sgf,\mu_m^{\otimes n-1})$.  In the case of local-global principles for semi-global fields, this situation was considered in \cite{CH}, \cite{HSS}, and \cite{HS16}.

In this situation, the condition for a class $\alpha\in H^n(\fy)$ to be trivial over the fields $\fy_v$ for $v \in \valsgf_{,0}$ is much weaker than it is when we consider the set $\valsgf$.  In particular, if $\dvf$ is a local field, we show in Theorem~\ref{gen fiber Sha} that for $n>2$ the groups $\shasgfz n$ can be nontrivial even if $\fy$ is a purely transcendental field extension of $\sgf$  (contrary to what happens in Theorem~\ref{LGP for line}, where the larger set of discrete valuations was used).

\begin{lemma} \label{sha 1 agree}
Let $\sgf$ be a semi-global field over a non-archimedean local field $\dvf$.  Then $\shasgfz 1(\sgf)=\shasgf 1(\sgf)$ and $\shasgf n(\sgf) \subseteq \shasgfz n(\sgf)$ for $n>1$.
\end{lemma}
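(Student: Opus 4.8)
Let $\sgf$ be a semi-global field over a non-archimedean local field $\dvf$. Then $\shasgfz 1(\sgf)=\shasgf 1(\sgf)$ and $\shasgf n(\sgf) \subseteq \shasgfz n(\sgf)$ for $n>1$.

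The second assertion needs no hypothesis on $\dvf$: since $\valsgf_{,0}\subseteq\valsgf$, the kernel defining $\shasgf n(\sgf)$ is contained in the one defining $\shasgfz n(\sgf)$, so $\shasgf n(\sgf)\subseteq\shasgfz n(\sgf)$ for every $n\ge 1$; for $n=1$ this is one of the two inclusions in the asserted equality. All the content is therefore the reverse inclusion $\shasgfz 1(\sgf)\subseteq\shasgf 1(\sgf)$, and this is where finiteness of the residue field $k$ of $\dvf$ will be used.

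To prove it, I would fix $\alpha\in\shasgfz 1(\sgf)\subseteq H^1(\sgf,\Z/m\Z)$ and let $L/\sgf$ be the associated cyclic extension, of degree $d\mid m$ (if $L=\sgf$ there is nothing to prove). For a divisorial valuation $v$ of $\sgf$, the class $\alpha$ vanishes in $H^1(\sgf_v,\Z/m\Z)$ exactly when $v$ splits completely in $L$; that is, when $L/\sgf$ is unramified at $v$ and the residue extension $\bar L_v/\kappa(v)$ is split. Now $\valsgf_{,0}$ consists of the valuations attached to horizontal prime divisors on a regular proper model of $\sgf$, while those in $\valsgf\setminus\valsgf_{,0}$ are attached to components of closed fibers. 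So I fix a vertical $v=v_Z\in\valsgf$ and a regular proper model $\sgm$ of $\sgf$ over $\dvr$ on which $Z$ is a reduced irreducible component of the closed fiber, and must show $L$ splits completely at $v_Z$.

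The first step is to show $L/\sgf$ is unramified at $v_Z$. If not, it is ramified along $Z\subset\sgm$ with some index $e>1$. Choose a closed point $P$ of $Z$ lying on no other component of the closed fiber, a local equation $t$ for $Z$ at $P$, and a parameter $g\in\ms O_{\sgm,P}$ such that $(t,g)$ is a regular system of parameters at $P$ and $g\nmid\pi$, where $\pi$ is a uniformizer of $\dvr$; then $D:=V(g)$ is a horizontal prime divisor through $P$ meeting $Z$ transversally at $P$, so it gives a valuation $v_D\in\valsgf_{,0}$. Since $v_D\in\valsgf_{,0}$, the extension $L$ is unramified at $v_D$ and $\bar L_{v_D}$ is split. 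On the other hand, the standard local description of tamely ramified cyclic extensions of the regular local scheme $\Spec\wh{\ms O}_{\sgm,P}$ along the normal-crossings divisor $V(tg)$ (Abhyankar's lemma) shows that, $L$ being ramified along $V(t)$ with index $e$, the residue extension $\bar L_{v_D}$, pushed to the completion of $\kappa(D)$ at the point $P$ (which equals $\kappa(D)$, a local field with uniformizer the image of $t$), is ramified with index $e$, hence nontrivial --- a contradiction.

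Granting this, $L/\sgf$ is unramified at every codimension-one point of the regular scheme $\sgm$, so by Zariski--Nagata purity of the branch locus it corresponds to a connected finite \'etale covering $\sgm'\to\sgm$; restricting over $Z$ gives a finite \'etale cover $\sgm'_Z\to Z$ of the curve $Z$ over the finite field $k$. For each of the cofinitely many closed points $P$ of $Z$ as above, with a transverse horizontal $D=V(g)$ through $P$, the base change $\sgm'\times_\sgm\Spec\ms O_{D,P}$ is finite \'etale over the complete discrete valuation ring $\ms O_{D,P}$ (this ring is finite over $\dvr$, with fraction field $\kappa(D)$ and residue field $\kappa(P)$); its generic fiber is $\bar L_{v_D}$, which is split because $\alpha\in\shasgfz 1(\sgf)$, so its special fiber $\sgm'_Z\times_Z\kappa(P)$ is split as well. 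Thus $\sgm'_Z\to Z$ is finite \'etale and splits completely at almost every closed point of $Z$, whence by the Chebotarev density theorem for the global function field $\kappa(Z)$ it is the trivial cover. So the residue extension $\bar L_{v_Z}$ is split; combined with the first step this gives $\alpha_{\sgf_{v_Z}}=0$. Since $v_Z$ was an arbitrary vertical valuation, $\alpha\in\shasgf 1(\sgf)$, as required.

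I expect the first step to be the main obstacle: one must make precise, uniformly in the equicharacteristic and mixed-characteristic cases (treating $\wh{\ms O}_{\sgm,P}$ via a Cohen subring) and allowing the closed fiber to be non-reduced or to have further components, the statement that ramification of a cyclic extension along a vertical prime divisor propagates to a ramified residue along every transverse horizontal divisor. The remaining ingredients --- purity of the branch locus, the behaviour of finite \'etale covers over a complete discrete valuation ring, and Chebotarev over a finite field --- are standard.
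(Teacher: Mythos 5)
Your proposal is correct in outline and rests on the same two pillars as the paper's proof: (i) at a point $P$ of a vertical component lying on no other component of the closed fiber, use a transverse horizontal divisor $D$ through $P$ (whose generic point is a closed point of the generic fiber, hence gives a valuation in $\valsgf_{,0}$ where the class is assumed to die) to force unramifiedness along the component and splitness over $P$; (ii) conclude splitness at the generic point of the component by Chebotarev density for the curve over the finite residue field. The differences are in packaging. The paper makes your flagged ``first step'' precise not via Abhyankar's lemma and Cohen subrings, but by first reducing to $m=\ell$ prime and, by a prime-to-$\ell$ base change, to $\zeta_\ell\in\dvf$, so the cover is Kummer, given by an $\ell$-th root of some $a$; then at $P$, with a regular system of parameters $(\sigma,\tau)$, splitness at the horizontal valuations forces $a=\tau^r u$ with $u$ a unit, and splitness at the generic point $z$ of $D=V(\sigma)$, read in the complete discrete valuation ring $\ms O_{D,P}$ with uniformizer $\bar\tau$, forces $r=0$; this gives unramifiedness along the component and splitness over $P$ in one stroke, with no appeal to Zariski--Nagata purity. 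Also, instead of verifying vanishing at each vertical $v\in\valsgf$ on an adapted regular model as you do, the paper passes from ``split at all closed points of the generic fiber and at all generic points of the reduced closed fiber of one regular model'' to membership in $\shasgf 1(\sgf)$ by citing the split-cover results of \cite{HHK:H1} (Corollary~5.3 and Proposition~8.2 there); your per-valuation route is viable but needs the standard remark that any vertical divisorial valuation has codimension-one center on some regular model. Two small points to repair in your write-up: you cannot in general arrange that $Z$ be a \emph{reduced} component of the closed fiber, nor do you need to --- what you need is that $P$ lie in the regular locus of the reduced closed fiber, which excludes only finitely many points; and to know that the branch locus of $L$ over $\wh{\ms O}_{\sgm,P}$ is contained in $V(t)$ (so that $V(tg)$ is the relevant normal-crossings divisor) you must invoke the splitness hypothesis at \emph{all} horizontal valuations through $P$, together with the compatibility between the completions of $\sgf$ at those valuations and the analytic branches in $\wh{\ms O}_{\sgm,P}$ --- bookkeeping that the paper's Kummer reduction makes essentially automatic.
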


\begin{proof}
The containment $\shasgf n(\sgf) \subseteq \shasgfz n(\sgf)$ is immediate for all $n \ge 1$ since 
$\valsgf_{,0} \subset \valsgf$.  
It remains to show that $\shasgfz 1(\sgf)\subseteq\shasgf 1(\sgf)$.

Pick a regular model $\sgm$ of $\sgf$.  Then each element  
$f \in \shasgfz 1(\sgf) = \shasgfz 1(\sgf,\Z/m\Z)$ corresponds to a $\Z/m\Z$-Galois branched cover $\sgm' \to \sgm$ that is split at every closed point of the generic fiber of $\sgm$.  We claim that $\sgm' \to \sgm$ is also split over the generic point of each irreducible component of the reduced closed fiber.  Once this is shown, 
by \cite[Corollary~5.3]{HHK:H1}
it follows that $\sgm' \to \sgm$ is a split cover; i.e., it is split over every point of $\sgm$ except possibly the generic point.  Then by \cite[Proposition~8.2]{HHK:H1}, it follows that $f \in \shasgf 1(\sgf) = \shasgf 1(\sgf,\Z/m\Z)$, which will complete the proof.

To prove the claim, it suffices to show that it holds in the case that $m$ is a prime, since a $\Z/m\Z$-Galois branched cover is a tower of cyclic covers of prime order, and since a split cover is \'etale and therefore each step in the tower is also a regular model.  So for the remainder of the proof we assume that $m = \ell$, a prime (unequal to the residue characteristic of $\dvr$).

To show the claim with $m = \ell$, we may assume that $\zeta_\ell \in K$, since $[K(\zeta_\ell):K]$ has degree prime to $\ell$ and so it would suffice to show that the base change to $K(\zeta_\ell)$ satisfies the splitness condition.  With $\zeta_\ell \in K$, the branched cover $\sgm' \to \sgm$ is Kummer, and so is given generically by adjoining an $\ell$-th root of some element $a \in K$.  

Let $C$ be an irreducible component of the reduced closed fiber of $\sgm$.  Pick a regular closed point $P$ of the reduced closed fiber such that $P$ lies on $C$, and take a regular system of parameters $\sigma,\tau$ for $\ms O_{\sgm, P}$.  Since $P$ is a regular point of the reduced closed fiber, $C$ is the unique irreducible component of the closed fiber passing through $P$; and at most one of the two parameters vanishes at the generic point of $C$.  Thus at 
least one of these two parameters, say $\sigma$, instead vanishes along the closure $D$ in $\Spec(\ms O_{\sgm, P})$ of a closed point $z$ of the general fiber $\gsg$ of $\sgm$.  Since $\sgm' \to \sgm$ is split over every point of the general fiber, it follows that $a=\tau^r u$, where $u$ is a unit in $\ms O_{\sgm, P}$ and where we may choose $a$ so that $0 \le r < \ell$.

Now $D$ is the spectrum of a complete discrete valuation ring whose uniformizer is the reduction of $\tau$ modulo $\sigma$.  By hypothesis, $\sgm' \to \sgm$ splits over $z$.  Since the extension of $K$ corresponding to $\sgm' \to \sgm$ is obtained by adjoining an $\ell$-th root of $a$, it follows that $r=0$.  Thus 
$\sgm' \to \sgm$ is \'etale over $\Spec(\ms O_{\sgm, P})$, and hence is unramified over the generic point of $C$.  Moreover $\sgm' \to \sgm$ is split over $P$, since its restriction over 
$\Spec(D) \subset \Spec(\ms O_{\sgm, P}) \subset \sgm$ is an \'etale cover that is split over its generic point $z$.  Hence the restriction of $\sgm' \to \sgm$ to $C$ is a branched cover that is split over all but possibly finitely many closed points (viz., over all the closed points of $C$ where the closed fiber is regular).  Since $C$ is a curve over a finite field, Chebotarev's density theorem implies that this branched cover of $C$ is trivial.  This shows that $\sgm' \to \sgm$ splits over the generic point of every irreducible component of the closed fiber of $\sgm$, proving the claim.
\end{proof}

\begin{lemma} \label{ShaK}
Let $\sgf$ be the function field of a smooth projective curve $\gsg$ over a non-archimedean local field $\dvf$ of residue characteristic $p$. Let $\sgm$ be a regular proper model of $\gsg$ over the ring of integers of $\dvf$ and let $\Gamma$ be the reduction graph associated to the closed fiber of $\sgm$. Then
\renewcommand{\theenumi}{\alph{enumi}}
\renewcommand{\labelenumi}{(\alph{enumi})}
\begin{enumerate}
\item\label{ShaK i}
$\shasgfz 1(\sgf)=\Hom(\pi_1(\Gamma), \Z/m\Z)$, and this group is trivial if and only if $\Gamma$ is a tree.
\item\label{ShaK ii}
the groups $\shasgfz 1(\sgf)$ 
and $\shasgfz 3(\sgf)$ are dual; i.e., there is a perfect pairing to $\Q/\Z$.
\item\label{ShaK iii}
$\shasgfz 2(\sgf)=0$.
\end{enumerate}
\end{lemma}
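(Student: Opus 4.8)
The three parts describe $\shasgfz{1}(\sgf)$, $\shasgfz{2}(\sgf)$, and $\shasgfz{3}(\sgf)$ for a semi-global field $\sgf = \dvf(\gsg)$ with $\dvf$ local. The natural strategy is to reduce everything to known statements about the closed-fiber reduction graph $\Gamma$ and then invoke Poitou–Tate-style duality. For part~(\ref{ShaK i}): an element of $\shasgfz{1}(\sgf) = \shasgfz{1}(\sgf,\Z/m\Z)$ corresponds to a $\Z/m\Z$-branched cover $\sgm' \to \sgm$ that splits over every closed point of the generic fiber $\gsg$. By Lemma~\ref{sha 1 agree}, $\shasgfz{1}(\sgf) = \shasgf{1}(\sgf)$, and by the cited results from \cite{HHK:H1} (Proposition~5.1, Corollaries~5.3 and~6.5) this group is identified with $\Hom(\pi_1(\Gamma),\Z/m\Z)$; a connected graph $\Gamma$ has trivial $\pi_1$ precisely when it is a tree, giving the stated criterion. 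This part I expect to be essentially a matter of quoting the right statements.

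\textbf{Parts (ii) and (iii).} For part~(\ref{ShaK iii}), I would argue that $\shasgfz{2}(\sgf)$ sits between $\shasgf{2}(\sgf)$ and a group that can be controlled by residues: an element $\alpha \in \shasgfz{2}(\sgf)$ is split at every completion at a closed point of $\gsg$, so by a Grunwald–Wang / Chebotarev argument applied to the residue $\partial(\alpha) \in \coprod_{y \in \gsg^{(1)}} H^1(\kappa(y))$ (each $\kappa(y)$ being a local field), one deduces $\partial(\alpha) = 0$, i.e.\ $\alpha \in H^2_{\nr}(\sgf/\sgm)$ comes from $\Br$ of the model. Then one uses that for $\sgm$ regular proper of dimension $2$ over a local-field valuation ring, $H^2_{\nr}$ of this type vanishes, or more directly one reduces to $\shasgf{2}(\sgf,\mu_m) = 0$ via \cite[Theorem~3.2.3(i)]{HHK:LGGC} after passing through the closed-fiber local-global principle; combining these forces $\shasgfz{2}(\sgf) = 0$. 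For part~(\ref{ShaK ii}), the duality between $\shasgfz{1}(\sgf)$ and $\shasgfz{3}(\sgf)$ should come from the arithmetic duality for the scheme $\sgm$ (dimension $3$, proper over the ring of integers of a local field): one has a cup-product pairing $H^1(\sgf,\Z/m\Z) \times H^3(\sgf,\mu_m^{\otimes 2}) \to H^4(\sgf,\mu_m) \hookrightarrow \Q/\Z$, and restricting to the $\Sha$-subgroups one gets a pairing which is perfect by a Poitou–Tate argument — here I would use the finiteness of these $\Sha$-groups (from part~(i) and \cite{HHK:H1}, together with $\cd(\sgf) = 4$) and the exactness of an appropriate nine-term Poitou–Tate sequence for the open part of the generic fiber.

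\textbf{Main obstacle.} The hardest point will be establishing the perfect duality in part~(\ref{ShaK ii}), since this requires a Poitou–Tate-type theorem adapted to the set $\valsgf_{,0}$ of valuations trivial on $\dvf$ — essentially a duality for the arithmetic surface $\gsg$ over the local field $\dvf$, or equivalently for the regular model $\sgm$ relative to its generic fiber. One likely route is to invoke the duality results of Saito–Sato or Jannsen–Saito for arithmetic schemes over local fields, or to build the pairing by hand from the class field theory of $\sgf$ developed in \cite{HHK:H1} together with the reciprocity encoded in $\Gamma$: concretely, identify both groups with (co)homology of the graph $\Gamma$ with $\Z/m\Z$-coefficients and observe $\Hom(\pi_1(\Gamma),\Z/m\Z) = H^1(\Gamma,\Z/m\Z)$ is dual to $H_1(\Gamma,\Z/m\Z)$, then match $\shasgfz{3}(\sgf)$ with $H_1(\Gamma,\Z/m\Z)$ via a residue/evaluation map. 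Verifying that this identification is compatible with the cup-product pairing, and that no contribution from the generic fiber's geometry survives, is the delicate step; the vanishing in part~(iii) will then also be consistent, since $\Gamma$ contributes nothing in the middle degree.
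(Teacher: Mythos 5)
Part (a) of your proposal is fine and is essentially the paper's argument (Lemma~\ref{sha 1 agree} plus the split-cover results of \cite{HHK:H1}). The gap is in parts (c) and (b). For (c): the hypothesis only controls $\alpha$ at the valuations in $\valsgf_{,0}$, i.e.\ at closed points of the generic fiber $\gsg$; for those the vanishing of the residue is immediate, since the residue map factors through the completion $\sgf_v$ (no Grunwald--Wang or Chebotarev input is needed there). What your argument never addresses is the residue of $\alpha$ along the \emph{vertical} divisors, i.e.\ at the generic points $s$ of the components $C$ of the reduced closed fiber of $\sgm$: these valuations are not in $\valsgf_{,0}$, so nothing in the hypothesis directly kills $\partial_s(\alpha)$, and yet this is exactly what is needed to place $\alpha$ in $\Br(\sgm)$ (which vanishes because it equals the Brauer group of the closed fiber, a proper curve over a finite field). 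The paper's proof is devoted precisely to this point: writing $\partial_s(\alpha)=[f]$ with $f\in\kappa(s)^\times$ (after reducing to $m=\ell$ prime and $\zeta_m\in\dvf$), it picks at each regular closed point $P\in C$ a local parameter $u$ cutting out a horizontal divisor $Z\ni P$, forms the auxiliary class $\alpha\cdot(u)$, and uses the Gersten complex together with the hypothesis at the generic point of $Z$ to show that $f$ becomes an $m$-th power in the completion of $\kappa(C)$ at every such $P$; Chebotarev density for the curve $C$ over the finite residue field then gives $f\in\kappa(C)^{\times m}$, so $\partial_s(\alpha)=0$. Your proposed shortcut ``reduce to $\shasgf 2(\sgf,\mu_m)=0$'' goes the wrong way: since $\valsgf_{,0}\subset\valsgf$ one only has $\shasgf 2(\sgf)\subseteq\shasgfz 2(\sgf)$, so the vanishing of the smaller group says nothing about the group you need.

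Part (b) is likewise not established by your sketch. The paper's proof is a citation of \cite[Theorem~4.4]{HS16}, a Poitou--Tate-type duality for function fields of curves over $p$-adic fields giving a perfect pairing between $\shasgfz i(\sgf,A)$ and $\shasgfz{4-i}(\sgf,A(-2))$ (applied with $i=3$, $A=\mu_m^{\otimes 2}$), plus the remark that the proof carries over to equal-characteristic local base fields when the torsion of $A$ is prime to $p$. Your construction of the pairing by cupping into $H^4(\sgf,\mu_m)\hookrightarrow \Q/\Z$ cannot work as stated: $\cd(\sgf)=3$ (not $4$, as you write), so $H^4(\sgf,\mu_m)=0$ and that global pairing is identically zero; the genuine pairing must be assembled Poitou--Tate style from the local dualities at the completions $\sgf_v$, $v\in\valsgf_{,0}$, which is what \cite{HS16} does. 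Your back-up plan of identifying $\shasgfz 3(\sgf)$ with $H_1(\Gamma,\Z/m\Z)$ by hand is plausible in spirit but is left entirely unverified, and you offer no argument for the compatibility you yourself flag as the delicate step; as it stands, (b) is a statement of intent rather than a proof.
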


\begin{proof}

First we prove (\ref{ShaK i}). 
By Lemma~\ref{sha 1 agree}, $\shasgfz 1(\sgf)=\shasgf 1(\sgf)$.  The elements of $\shasgf 1(\sgf)$ define $\Z/m\Z$-Galois split covers of $\sgm$; and conversely, each $\Z/m\Z$-Galois split cover of $\sgm$ corresponds to an element $H^1(\sgf,\Z/m\Z)$ that lies in $\shasgf 1(\sgf)$, by 
\cite[Proposition~8.2]{HHK:H1}.  By \cite[Corollary 6.4]{HHK:H1}, the set of split covers of $\sgm$ is classified by $\Hom(\pi_1(\Gamma), \Z/m\Z)$.  So assertion (\ref{ShaK i}) follows.

For part (\ref{ShaK ii}), 
the case where $k$ is a $p$-adic field follows from \cite[Theorem 4.4]{HS16}, which asserts in this situation that for any finite Galois module $A$ there is a perfect pairing between $\shasgfz i(\sgf,A)$ and $\shasgfz {4-i}(\sgf,A(-2))$; viz., we can take $i=3$ and $A=\mu_m^{\otimes 2}$.  The proof of that theorem carries over mutatis mutandis to the case of a smooth curve over an equal characteristic $p$ local field, provided that the torsion in $A$ is prime to $p$.  Since our choice of $m$ is assumed prime to the residue characteristic $p$, part (\ref{ShaK ii}) follows in that case too.

For (\ref{ShaK iii}), first note that $\shasgfz 2(\sgf) = \shasgfz 2(\sgf, \mu_m)$ consists of the $m$-torsion elements of $\Br(F)$ that are locally trivial with respect to $\valsgf_{,0}$.  Thus if 
$\alpha$ is a non-trivial element of $\shasgfz 2(\sgf, \mu_m)$, then a multiple of $\alpha$ is a non-trivial element of $\shasgfz 2(\sgf, \mu_\ell)$, for some prime $\ell$ that divides $m$.  Hence we are reduced to proving the result in the case that $m=\ell$ is prime.  In that situation, $[\sgf[\zeta_m]:\sgf] = [\dvf[\zeta_m]:\dvf]$ is prime to $m$; and so by a restriction-corestriction argument we may assume that $\zeta_m$ lies in $\dvf$, and hence in $\dvr$.  

Now let $\alpha\in \shasgfz 2(\sgf) = \shasgfz 2(\sgf, \mu_m)$.  We claim that 
$\partial_s(\alpha)=0$
for every codimension $1$ point $s\in \sgm^{(1)}$.  By \cite[Th\'eor\`eme~3.1, p.~98]{G}, the Brauer group of a regular curve $\sgm$ over the ring of integers of $\dvf$ is 
equal to that of its closed fiber and hence is trivial.  Thus the claim will imply that $\alpha=0$, and so will give (\ref{ShaK iii}).

Since the residue map at any closed point $x\in \gsg$ factors through $\sgf_v$, we deduce that $\alpha$ is unramified at $x$. Hence it only remains to check the claim that 
$\partial_s(\alpha) \in H^1(\kappa(s),\Z/m\Z)$ 
is trivial for $s$ a generic point of a component $C$ of the reduced closed fiber of $\sgm$.
But $\zeta_m \in \kappa(s)$, since $\zeta_m \in \dvr$.  So we may identify $H^1(\kappa(s),\Z/m\Z)$ with $H^1(\kappa(s),\mu_m) = \kappa(s)^\times/\kappa(s)^{\times m}$, and identify $\partial_s(\alpha)$ with the $m$-th power class $[f]$ of some $f \in \kappa(s)^\times$.

Let $P\in C$ be a closed point in the regular locus of the reduced closed fiber.
 The local ring $\mathcal {O}_{\sgm, P}$ is a regular local ring with maximal ideal generated by two local parameters. The zero locus of one of these parameters, which we call $u$, defines a regular connected closed subscheme  $Z\subset \sgm$ that is not contained in the closed fiber. Let $z$ be the generic point of $Z$. Consider the element $\beta=\alpha\cdot (u)$. Since $\alpha$ is trivial in $\sgf_z$ we deduce that $\beta$ is unramified at $Z$. Hence $\beta$ can  be ramified on $\sgm$ only along the closed fiber.  Then from the Gersten complex we obtain that 
 $$0=\partial_P{\partial_z(\alpha\cdot (u))}=-\partial_P\partial_s(\alpha\cdot (u))=-\partial_P((f)\cdot (\bar u)),$$
where $\bar u$ is the class of $u$ modulo the ideal of the reduced closed fiber. Hence, $\partial_P((f)\cdot (\bar u))=0$. Similarly, we have that $\partial_P(f)=\partial_P\partial_z(\alpha)=0$,  so that $f$ comes from $\wh {\ms{O}}_{C,P}^\times$. But since $\partial_P((f)\cdot (\bar u))$ is trivial,
 we deduce that the image of $f$ in the completion of $\kappa(C)$ at $P$ is an $m$-th power, for any $P\in C$ in the regular locus of the reduced closed fiber. By Chebotarev density, we deduce that $f$ is itself an $m$-power.  Thus $\partial_s(\alpha)=0$, proving the claim and hence~(\ref{ShaK iii}).
\end{proof}

\begin{rem}
\renewcommand{\theenumi}{\alph{enumi}}
\renewcommand{\labelenumi}{(\alph{enumi})}
\begin{enumerate}
\item
Part~(\ref{ShaK iii}) of the above lemma was previously shown in the $p$-adic case by 
using Lichtenbaum's duality pairing, in the second paragraph of the proof of part (2) of \cite[Proposition~3.4]{HS16}.
\item
In the above lemma, the assumption of smoothness (and not just regularity) was used only in part (\ref{ShaK ii}).  There, it was needed in order to carry over to the equal characteristic case the proof of \cite[Theorem 4.4]{HS16}, which relies on Artin-Verdier duality and hence on Poincar\'e duality.  As a result, smoothness is also used in the parts of the next theorem that rely on Lemma~\ref{ShaK}(\ref{ShaK ii}); viz., the case $n=4$ of part (\ref{gen fiber Sha i}), and part (\ref{gen fiber Sha ii}).
\end{enumerate}
\end{rem}

\begin{thm} \label{gen fiber Sha}
Let $\sgf$ be the function field of a smooth projective curve over a non-archimedean local field $\dvf$ of residue characteristic $p$ and let $\fy=\sgf(\mathbb P^1_\sgf)$ be a purely transcendental field extension of $\sgf$. Then
\renewcommand{\theenumi}{\alph{enumi}}
\renewcommand{\labelenumi}{(\alph{enumi})}
\begin{enumerate}
\item \label{gen fiber Sha i}
$\shasgfz n(\fy)\neq 0$ for $n=2,4$.
\item \label{gen fiber Sha ii}
The group $\shasgfz 3(\fy)$ is trivial if and only if the reduction graph of a regular model $\sgm$ of $\sgf$ over the ring of integers of $\dvf$ is  a tree.
\end{enumerate}
\end{thm}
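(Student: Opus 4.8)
The strategy is to reduce from $\fy = \sgf(\mathbb P^1_\sgf)$ to $\sgf$ itself via the Faddeev-type sequence of Proposition~\ref{ShaD}, and then to apply the computation of $\shasgfz n(\sgf)$ from Lemma~\ref{ShaK} together with the non-tree cover construction of Lemma~\ref{non-tree cover}. The key point is that Proposition~\ref{ShaD} is stated for an arbitrary collection of overfields $\sgf_v$, $v \in \Omega$, so it applies verbatim with $\Omega = \valsgf_{,0}$, giving a short exact sequence
\[
0 \to \shasgfz n(\sgf) \to \shasgfz n(\fy) \to \coprod_{y \in \mathbb A^{1\,(1)}_\sgf} \shasgfz {n-1}(\kappa(y)) \to 0
\]
for every $n \ge 1$, where for a closed point $y \in \mathbb A^1_\sgf$ with residue field $L = \kappa(y)$, a finite extension of $\sgf$, the group $\shasgfz{n-1}(L)$ is formed with respect to the valuations on $L$ that restrict to valuations in $\valsgf_{,0}$ — equivalently, with respect to the closed points of the generic fiber of a model of $L$ over $\dvr$, i.e. the analogue of $\valsgf_{,0}$ for $L$. (One must check this identification of the induced set of valuations on $L$; it follows because $L$ is finite over $\sgf$, so a model of $L$ maps finitely to $\sgm$ and closed points of its generic fiber lie over closed points of $\gsg$, and vice versa up to birational modification as in Lemma~\ref{flattening}. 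Note $L$ is itself a semi-global field over $\dvf$, or more precisely over a finite extension of $\dvf$.)

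\medskip

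For part~(\ref{gen fiber Sha i}), the case $n = 2$ is immediate: the exact sequence surjects onto $\coprod_y \shasgfz 1(\kappa(y))$, and by Lemma~\ref{non-tree cover} there is a finite separable extension $L/\sgf$ with a regular model whose reduction graph is not a tree, hence $\shasgfz 1(L) = \shasgf 1(L) \ne 0$ by Lemma~\ref{ShaK}(\ref{ShaK i}) (applied to $L$ in place of $\sgf$, which is legitimate since $L$ is the function field of a smooth projective curve over a finite extension of $\dvf$ — note that smoothness here can be arranged, or one passes to the relevant case of Lemma~\ref{ShaK}); taking $y \in \mathbb A^1_\sgf$ a closed point with residue field $L$ (which exists since $L/\sgf$ is finite separable, hence simple) gives a nonzero summand, so $\shasgfz 2(\fy) \ne 0$. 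For the case $n = 4$, the exact sequence surjects onto $\coprod_y \shasgfz 3(\kappa(y))$, and by Lemma~\ref{ShaK}(\ref{ShaK ii}), for $L$ as above $\shasgfz 3(L)$ is dual to $\shasgfz 1(L) \ne 0$, hence nonzero; so again $\shasgfz 4(\fy) \ne 0$. (This is where the smoothness hypothesis of Lemma~\ref{ShaK}(\ref{ShaK ii}) is used, as flagged in the remark before the theorem.)

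\medskip

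For part~(\ref{gen fiber Sha ii}), with $n = 3$ the exact sequence reads
\[
0 \to \shasgfz 3(\sgf) \to \shasgfz 3(\fy) \to \coprod_{y \in \mathbb A^{1\,(1)}_\sgf} \shasgfz 2(\kappa(y)) \to 0.
\]
By Lemma~\ref{ShaK}(\ref{ShaK iii}) applied to each residue field $\kappa(y)$ — again a function field of a curve over a finite extension of $\dvf$, so that part (\ref{ShaK iii}) applies (and its proof uses only regularity, not smoothness) — the rightmost term vanishes. Hence $\shasgfz 3(\fy) \cong \shasgfz 3(\sgf)$. By Lemma~\ref{ShaK}(\ref{ShaK ii}), $\shasgfz 3(\sgf)$ is dual to $\shasgfz 1(\sgf)$, and by Lemma~\ref{ShaK}(\ref{ShaK i}) the latter is $\Hom(\pi_1(\Gamma), \Z/m\Z)$, which is trivial if and only if $\Gamma$ is a tree. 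Therefore $\shasgfz 3(\fy)$ is trivial if and only if the reduction graph of a regular model of $\sgf$ is a tree, as asserted.

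\medskip

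\textbf{Main obstacle.} The principal subtlety is not the homological algebra but the bookkeeping of valuations: one must verify carefully that, under the identification in Proposition~\ref{ShaD}, the set of valuations on $\kappa(y)$ induced from $\valsgf_{,0}$ on $\fy = \sgf(t)$ coincides with the set "$\Omega_{\kappa(y),0}$" of valuations trivial on $\dvf$ (i.e. centered on closed points of the generic fiber of a model of $\kappa(y)$), so that the terms $\shasgfz{n-1}(\kappa(y))$ really are of the type computed in Lemma~\ref{ShaK}. This uses that $\kappa(y)/\sgf$ is finite, together with Lemma~\ref{flattening} to handle the passage between models, and the fact that a valuation of $\sgf(t)$ trivial on $\dvf$ restricts to a valuation of $\sgf$ trivial on $\dvf$ (and every closed point of $\mathbb A^1_{\kappa(y)}$-type contributes). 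A secondary point is ensuring that the curves $\kappa(y)/$(finite extension of $\dvf$) satisfy the smoothness hypothesis needed for Lemma~\ref{ShaK}(\ref{ShaK ii}) in the $n=4$ and $n=3$ cases; since we are free to choose the closed point $y$, and by Lemma~\ref{non-tree cover} the relevant extension $L/\sgf$ can be taken separable, one can arrange $\kappa(y)$ to be the function field of a smooth projective curve (after possibly enlarging the constant field), so this causes no difficulty.
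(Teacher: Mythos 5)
Your proposal is correct and follows essentially the same route as the paper: apply Proposition~\ref{ShaD} with $\Omega=\valsgf_{,0}$, use Lemma~\ref{non-tree cover} plus Lemma~\ref{ShaK}(\ref{ShaK i}) for $n=2$, duality (Lemma~\ref{ShaK}(\ref{ShaK ii})) for $n=4$, and the vanishing of $\shasgfz 2(\kappa(y))$ (Lemma~\ref{ShaK}(\ref{ShaK iii})) to identify $\shasgfz 3(\fy)$ with $\shasgfz 3(\sgf)$ for part~(b). Your care about identifying the induced valuations on $\kappa(y)$ with $\Omega_{\kappa(y),0}$ and about where smoothness is needed matches the paper's own parenthetical remark and the remark preceding the theorem.
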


\begin{proof}
We consider the exact sequence in Proposition~\ref{ShaD} with $\Omega=\valsgf_{,0}$.

For part~(\ref{gen fiber Sha i}), take $n=2$ in this exact sequence.  By Lemma~\ref{non-tree cover}, there is a finite separable extension $\sgf'$ of $\sgf$ having a model whose reduction graph is not a tree.  There is a point $y\in
\mathbb A^{1\,(1)}_\sgf$ with $\kappa(y)=\sgf'$; and the corresponding term in the right hand side of the exact sequence is non-zero by Lemma~\ref{ShaK}(\ref{ShaK i}).  
(Here we use that the valuations in $\Omega_{\sgf',0}$ are precisely the extensions to $\sgf'$ of the valuations in $\valsgf_{,0}$.)
Hence the middle term of the exact sequence, 
$\shasgfz 2(\fy)$, is also non-zero, proving the case $n=2$.

Similarly, taking $n=4$ in this exact sequence, Lemma~\ref{ShaK}(\ref{ShaK ii}) together with the previous case imply that the right hand term in the sequence is non-zero.  Hence $\shasgfz 4(\fy)$ is nonzero as well.

For part~(\ref{gen fiber Sha ii}), take $n=3$ in this exact sequence.  By Lemma~\ref{ShaK}(\ref{ShaK iii}), the right hand term in the sequence vanishes, and so the map 
$\shasgfz 3(\sgf)~\to~\shasgfz 3(\fy)$  
is an isomorphism. Hence the assertion follows from Lemma~\ref{ShaK}(\ref{ShaK i}),(\ref{ShaK ii}).
\end{proof}

We note that the case $n=2$ above can also be deduced from Theorem~\ref{LGP for line}(\ref{LGP line ii}) together with the fact that $\shasgf 2(\fy) \subseteq \shasgfz 2(\fy)$, where this inclusion follows from the containment $\valsgf_{,0} \subset \valsgf$.

\end{document}